 \newtheorem{theorem}{Theorem}[section]
 \newtheorem{corollary}[theorem]{Corollary}
 \newtheorem{lemma}[theorem]{Lemma}
 \newtheorem{proposition}[theorem]{Proposition}
 \theoremstyle{definition}
 \newtheorem{definition}[theorem]{Definition}
 \theoremstyle{remark}
 \newtheorem{remark}[theorem]{Remark}
 \newtheorem{example}[theorem]{Example}
\numberwithin{equation}{section}
\newcommand{\isomto}{\overset{\sim}{\to}}
\newcommand{\longisomto}{\overset{\sim}{\longrightarrow}}
\newcommand{\longto}{\longrightarrow}
\newcommand*\rel@kern[1]{\kern#1\dimexpr\macc@kerna}
\newcommand*\widebar[1]{%
  \begingroup
  \def\mathaccent##1##2{%
    \rel@kern{0.8}%
    \overline{\rel@kern{-0.8}\macc@nucleus\rel@kern{0.2}}%
    \rel@kern{-0.2}%
  }%
  \macc@depth\@ne
  \let\math@bgroup\@empty \let\math@egroup\macc@set@skewchar
  \mathsurround\z@ \frozen@everymath{\mathgroup\macc@group\relax}%
  \macc@set@skewchar\relax
  \let\mathaccentV\macc@nested@a
  \macc@nested@a\relax111{#1}%
  \endgroup
}
\DeclareMathOperator\Aut{Aut}
\DeclareMathOperator\Hom{Hom}
\DeclareMathOperator\Ell{\mathcal{E}\!\ell\ell}
\DeclareMathOperator\End{End}
\DeclareMathOperator\Spec{Spec}
\DeclareMathOperator\Sh{Sh}
\DeclareMathOperator\coker{coker}
\DeclareMathOperator\Pic{Pic}
\DeclareMathOperator\GL{GL}
\DeclareMathOperator\GSp{GSp}
\DeclareMathOperator\SO{SO}
\DeclareMathOperator\Orth{O}
\DeclareMathOperator\SL{SL}
\DeclareMathOperator\Gal{Gal}
\DeclareMathOperator\Res{Res}
\def\bQ{{\mathbf{Q}}} \def\bZ{{\mathbf{Z}}} 
 \def\bG{{\mathbf{G}}} \def\bR{{\mathbf{R}}}
\def\bC{{\mathbf{C}}}  
\def\bA{{\mathbf{A}}}  \def\bS{{\mathbf{S}}}
 \def\cU{{\mathcal{U}}} 
  \def\cL{{\mathcal{L}}} 
\def\cM{\mathcal M}  
\def\cH{\mathcal H}  
\def\cK{{\mathcal K}}  \def\cA{{\mathcal{A}}}
 \def\cB{{\mathcal B}} \def\cX{{\mathcal{X}}}
\def\cV{{\mathcal V}}
 \def\rR{{\rm R}} \def\rH{{\rm H}}
\def\ab{{\rm ab}}
\def\id{{\rm id}}
\def\et{{\rm et}}
\def\an{{\rm an}}
\def\rec{{\rm rec}}
\begin{document}

%
%
%
%
%
%
%
%
%

\title{Complex multiplication and Shimura stacks}

\author[L. Taelman]{Lenny Taelman}

\address{
Korteweg--de Vries Institute for Mathematics\\
Universiteit van Amsterdam\\
P.O. Box 94248\\
1090 GE Amsterdam\\
the Netherlands
}

\email{l.d.j.taelman@uva.nl}

\begin{abstract} 
We prove a variant of the reciprocity laws for CM abelian varieties, CM K3 surfaces, and CM points on Shimura varieties. Given a CM object over $\bC$, our variation describes the set of all models over a given number field $F$ in terms of associated representations of $\Gal_F$. An essential feature is that we work with Shimura stacks to deal with objects that have non-trivial automorphisms. 

To prove the result on K3 surfaces, we show that the stack of polarized K3 surfaces of given degree is an open substack of a certain Shimura stack. The precise statement of this folklore fact seems to be missing from the literature.
\end{abstract}

\maketitle

\tableofcontents

\section{Introduction and statement of the main results}

\subsection{Abelian varieties}\label{subsec:intro-AV}
Let $A$ be a complex abelian variety.  Let $F$ be a subfield of $\bC$. Then $A$ may or may not be defined over $F$, and if it is defined over $F$ there may be multiple non-isomorphic models $\cA/F$. Let $\cB(A,F)$ be the set of isomorphism classes of models of $A$ over $F$, that is
\[
	\cB(A,F) := \{ (\cA,\alpha) \mid \text{$\cA/F$, $\alpha\colon \cA_\bC \isomto A$}\}/
	\cong
\]
Let $\bar F$ be the algebraic closure of $F$ in $\bC$ and set $\Gal_F := \Gal(\bar F / F)$. The choice of a
model $(\cA,\alpha)$ of $A$ over $F$ defines an action of $\Gal_F$ on $\rH^1(A,\bZ_\ell)$.

\begin{lemma}\label{lemma:injectivity}
The map $\cB(A,F) \to \Hom( \Gal_F,\, \GL(\rH^1(A,\bZ_\ell)))$ is injective.
\end{lemma}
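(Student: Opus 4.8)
The plan is to prove that the map has trivial fibres: if two models $(\cA,\alpha)$ and $(\cA',\alpha')$ induce the same homomorphism $\rho\colon\Gal_F\to\GL(\rH^1(A,\bZ_\ell))$, I will show they are isomorphic in $\cB(A,F)$. If $\cB(A,F)$ is empty there is nothing to prove, so fix two such models. The two complex identifications furnish a canonical isomorphism $g:=(\alpha')^{-1}\circ\alpha\colon\cA_\bC\isomto\cA'_\bC$ over $\bC$, and the entire task is to descend $g$ all the way down to $F$. The argument rests on one standard fact used twice: a homomorphism of abelian varieties is determined by its action on $\ell$-adic (co)homology. I expect to invoke it once to control fields of definition and once to verify Galois-invariance.

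First I would descend $g$ from $\bC$ to $\bar F$. Since $\cA$ and $\cA'$ are defined over $F\subseteq\bar F$ and $\bar F\injto\bC$ is an extension of algebraically closed fields, the natural map $\Hom_{\bar F}(\cA_{\bar F},\cA'_{\bar F})\to\Hom_\bC(\cA_\bC,\cA'_\bC)$ is a bijection, as homomorphisms of abelian varieties do not change under extension of algebraically closed base fields. Hence $g$ comes from a unique isomorphism $g_0\colon\cA_{\bar F}\isomto\cA'_{\bar F}$ over $\bar F$.

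Next I would compute the $\ell$-adic realization of $g_0$ and read off equivariance. Under the comparison isomorphism $\rH^1(\cA_{\bar F},\bZ_\ell)\cong\rH^1(\cA_\bC,\bZ_\ell)$ followed by the identification induced by $\alpha$, the natural $\Gal_F$-action on $\rH^1(\cA_{\bar F},\bZ_\ell)$ becomes exactly $\rho$, and likewise for $\cA'$ via $\alpha'$ one recovers $\rho'$. Because $g_0$ base-changes to $g=(\alpha')^{-1}\alpha$, the induced map $g_0^*$ becomes the \emph{identity} of $\rH^1(A,\bZ_\ell)$ under these two identifications. Since $\rho=\rho'$ by hypothesis, this identity intertwines the two actions, i.e. $g_0^*$ is $\Gal_F$-equivariant.

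Finally I would run Galois descent. For $\sigma\in\Gal_F$ the conjugate $g_0^\sigma$ is again a homomorphism $\cA_{\bar F}\to\cA'_{\bar F}$, whose $\ell$-adic realization is the conjugate of $g_0^*$ by the actions $\rho$ and $\rho'$; the equivariance just established forces $(g_0^\sigma)^*=g_0^*$. By faithfulness of the realization functor $\cA\rightsquigarrow\rH^1(\cA_{\bar F},\bZ_\ell)$ on homomorphisms of abelian varieties, this gives $g_0^\sigma=g_0$ for every $\sigma$, so $g_0$ is $\Gal_F$-invariant. As $F$ has characteristic zero, $\bar F/F$ is Galois with $\bar F^{\Gal_F}=F$, and Galois descent yields an isomorphism $f\colon\cA\isomto\cA'$ over $F$ with $f_\bC=g$, that is $\alpha'\circ f_\bC=\alpha$; this is precisely an isomorphism $(\cA,\alpha)\isomto(\cA',\alpha')$ in $\cB(A,F)$. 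The main obstacle is conceptual rather than computational: one must carefully match the two transported Galois actions through the chain of comparison isomorphisms so that the hypothesis $\rho=\rho'$ turns into honest equivariance of $g_0$, after which faithfulness of the $\ell$-adic realization does the rest.
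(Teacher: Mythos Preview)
Your proof is correct and follows essentially the same route as the paper's: form the comparison isomorphism $g=(\alpha')^{-1}\alpha$, descend it to $\bar F$, use the hypothesis $\rho=\rho'$ together with faithfulness of the $\ell$-adic realization to show it is $\Gal_F$-invariant, and then descend to $F$. The paper's version is just a terser packaging of the same steps.
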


\begin{proof}
Let $(\cA,\alpha)$ and $(\cB,\beta)$ be models over $F$ of $A$. The isomorphism $\beta^{-1} \alpha$ from $\cA_\bC$  to $\cB_\bC$ is defined over $\bar F$. Now assume that the models define the same  action on $\rH^1(A,\bZ_\ell)$. Then for every $\sigma \in \Gal_F$ the isomorphisms $\cA_{\bar F} \to \cB_{\bar F}$ given by $\beta^{-1} \alpha$ and by $\sigma^{-1}\beta^{-1} \alpha \sigma$ induce the same isomorphism  $\rH^1_\et( \cB_{\bar F}, \bZ_\ell) \to\rH^1_\et( \cA_{\bar F}, \bZ_\ell)  $, and hence coincide. It follows that $\beta^{-1} \alpha$ is defined over $F$, and that $(\cA,\alpha)$ and $(\cB,\beta)$ give the same element of $\cB(A,F)$.
\end{proof}

If $A$ has complex multiplication, then the main theorem of CM implies that $A$ can be defined over a number field, and suggests that it should be possible to describe the image of the injective map of
Lemma \ref{lemma:injectivity}. One of the main results of this note is such a description, under the condition that $F$ contains the reflex field of $A$, and using $\hat\bZ$-coefficients instead of $\bZ_\ell$-coefficients. We now give the precise statement.

Assume $A$ has complex multiplication, and let $T$ be the Mumford-Tate group of $A$. This is an algebraic torus over $\bQ$. Class field theory provides a canonical homomorphism
\[
	\rec\colon \Gal_E \to  T(\bA_f)/T(\bQ)
\]
where $E\subset \bC$ is the reflex field of $A$ (see \S~\ref{sec:reflex-field} for more details). 
Denote by $C$ the centralizer of $T$ in $\GL(\rH^1(A,\bQ))$. This is an algebraic group whose $\bQ$-points can be identified with the group of units in the algebra $\bQ \otimes \End A$. The map
\[
	C(\bQ)T(\bA_f) \to T(\bA_f)/T(\bQ),\, ct\mapsto t
\]
is well-defined since $C(\bQ) \cap T(\bA_f) = T(\bQ)$.

\begin{theorem}\label{thm:AV}
Let $F\subset \bC$ be a number field containing $E$. Then the image of the injective map
\[
	\cB(A,F) \to \Hom( \Gal_F,\, \GL(\rH^1(A,\hat\bZ)))
\]
consists precisely of those $\rho\colon \Gal_F \to \GL(\rH^1(A,\hat\bZ))$ satisfying
\begin{enumerate}
\item the image of $\rho$ is contained in $C(\bQ) T(\bA_f) \subset \GL(\rH^1(A,\bA_f))$,
\item the diagram
\[
\begin{tikzcd}
\Gal_F \arrow[hook]{r} \arrow{d}{\rho} & \Gal_E \arrow{d}{\rec} \\
C(\bQ)T(\bA_f) \arrow{r}  &  T(\bA_f)/T(\bQ)
\end{tikzcd}
\]
commutes,
\item there exists a polarization $\Psi\colon \rH^1(A,\bQ) \times \rH^1(A,\bQ) \to \bQ(-1)$
such that the image of $\rho$ lands in $\GSp(\rH^1(A,\bA_f),\Psi)$.
\end{enumerate}
\end{theorem}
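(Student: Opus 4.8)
The plan is to take injectivity for granted — the proof of Lemma~\ref{lemma:injectivity} applies verbatim with $\hat\bZ$-coefficients — and then to prove separately that conditions (1)--(3) are necessary and that they are sufficient for a continuous $\rho$ to lie in the image. Throughout I would exploit that $C$ is \emph{commutative}: writing $L := \bQ\otimes\End A$ for the CM algebra acting on $\rH^1(A,\bQ)$, the space $\rH^1(A,\bQ)$ is free of rank one over $L$, so $C=\Res_{L/\bQ}\bG_m$ and $C(\bQ)T(\bA_f)\subseteq C(\bA_f)=(L\otimes\bA_f)^\times$ is abelian, acting by multiplication. The decisive consequence is that if $\rho$ and $\rho_0$ are two representations with image in $C(\bQ)T(\bA_f)$, then $\delta := \rho\,\rho_0^{-1}$ is again a homomorphism; this is what will let me compare an arbitrary $\rho$ to a fixed reference model by a \emph{twist}.

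For necessity, condition (3) comes from polarizability: any model $\cA/F$ carries a polarization whose Weil pairing endows $\rH^1(A,\hat\bZ)$ with an alternating form valued in $\hat\bZ(-1)$, preserved by $\Gal_F$ with cyclotomic similitude, and hence gives a rational $\Psi$ with $\rho(\Gal_F)\subseteq\GSp(\rH^1(A,\bA_f),\Psi)$. Conditions (1) and (2) are a repackaging of the main theorem of complex multiplication as recalled in \S\ref{sec:reflex-field}: for $F\supseteq E$ the action of $\sigma\in\Gal_F$ on the adelic Tate module is multiplication by an idele in $T(\bA_f)$, corrected by a rational endomorphism in $C(\bQ)$, whose class in $T(\bA_f)/T(\bQ)$ is $\rec(\sigma)$.

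For sufficiency I would fix one model, with representation $\rho_0$ (which satisfies (1)--(3) by necessity), and given any $\rho$ satisfying (1)--(3) set $\delta=\rho\,\rho_0^{-1}\colon\Gal_F\to C(\bA_f)$. Condition (2) forces $\delta(\sigma)$ into the kernel of $C(\bQ)T(\bA_f)\to T(\bA_f)/T(\bQ)$, which equals $C(\bQ)$ because $C(\bQ)\cap T(\bA_f)=T(\bQ)$; since $\delta(\sigma)$ also preserves the lattice, it lies in $C(\bQ)\cap\GL(\rH^1(A,\hat\bZ))=\End(A)^\times=\Aut(A_\bC)$. I would then twist $\cA$ by $\delta$. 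Here I would note that, because $\rho_0(\Gal_F)\subseteq C(\bA_f)$ centralizes $C(\bQ)$, the Galois action on $\Aut(\cA_{\bar F})$ is trivial — each automorphism is in fact defined over $F$ — so a homomorphism $\delta$ is automatically a $1$-cocycle, and the twisted variety $\cA^\delta$ carries the $\Gal_F$-representation $\delta\rho_0=\rho$, as required.

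I expect the main obstacle to be effectivity of this twist: $\cA^\delta$ is an abelian variety over $F$ only if $\delta$ factors through a finite quotient of $\Gal_F$, whereas $\End(A)^\times$ is in general infinite. This is exactly where condition (3) is indispensable. Writing $x\mapsto\bar x$ for the Rosati involution of a polarization (complex conjugation on $L$) and $L_0\subseteq L$ for the maximal totally real subalgebra, membership of a multiplication operator $u\in C(\bA_f)$ in $\GSp(\Psi)$ is equivalent to $\bar u u\in\bA_f^\times$, \emph{independently of which $\Psi$ one chooses}. Applying this to $\rho$ (using (1) that $\rho(\sigma)\in C(\bA_f)$) and to $\rho_0$ (whose similitude is cyclotomic, hence in $\hat\bZ^\times\subseteq\bA_f^\times$) gives $\Nm_{L/L_0}(\delta(\sigma))=\overline{\delta(\sigma)}\,\delta(\sigma)\in L_0\cap\bA_f^\times=\bQ^\times$; as $\delta(\sigma)\in\End(A)^\times$ this is a totally positive rational unit, hence equals $1$, so every archimedean absolute value of $\delta(\sigma)$ is $1$. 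By Kronecker's theorem $\delta(\sigma)$ is a root of unity, so $\delta$ lands in the finite group of roots of unity of $\End(A)$ and factors through a finite quotient. Making this chain of equivalences precise — in particular that (3) forces the similitude to be cyclotomic and thereby confines $\delta$ to the norm-one units — is the crux; once it is in place the twist is effective and the image of $\cB(A,F)$ is exactly the set cut out by (1)--(3).
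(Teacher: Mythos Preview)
Your argument rests on the claim that $C$ is commutative --- you write that $\rH^1(A,\bQ)$ is free of rank one over $L=\bQ\otimes\End A$ and conclude $C=\Res_{L/\bQ}\bG_m$. This is false in general. The paper's Remark immediately after the theorem gives the relevant example: for $A=E^2$ with $E$ a CM elliptic curve one has $L=M_2(\bQ(i))$, so $C(\bQ)=\GL_2(\bQ(i))$ is non-abelian, and indeed there are models $\cA/F$ with \emph{non-abelian} Galois image. In that situation $\rH^1(A,\bQ)\cong\bQ(i)^2$ is the simple $M_2(\bQ(i))$-module, not a free one, so your identification of $C$ collapses. Consequently $\delta=\rho\rho_0^{-1}$ is only a $1$-cocycle (for the $\rho_0$-twisted action on $\Aut(\cA_{\bar F})$), not a homomorphism, and your Kronecker step --- which relies on the commutative identity $\overline{\delta}\,\delta=(\overline{\rho}\rho)(\overline{\rho_0}\rho_0)^{-1}$ --- no longer goes through. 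Since $\Aut(A)=\GL_2(\bZ[i])$ is infinite here, showing that the cocycle $\delta$ has finite image (so that the twist is effective) is a genuine problem that your argument does not address.

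There is a second, smaller gap: your sufficiency argument begins by fixing a reference model $(\cA_0,\rho_0)$ over $F$, but you have not shown that one exists. A priori $\cB(A,F)$ could be empty while the set of $\rho$ satisfying (1)--(3) is not; the theorem asserts this cannot happen, but your twisting strategy cannot prove it.

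By contrast, the paper deduces the theorem directly from the general statement about CM points on Shimura stacks (Theorem~\ref{thm:Sh}): one embeds $(A,\Psi)$ as a $\bC$-point of the Siegel Shimura stack $\Sh_\cK[\GSp_\Psi,\cH_g]$ and reads off the description of $F$-points beneath it. That route never needs $C$ to be commutative and simultaneously produces the reference model, so both of the issues above are handled by the moduli interpretation. Your twisting idea is natural and does work cleanly when $A$ has no repeated isogeny factors (so $L$ is a product of CM fields), but it would need substantial reworking to cover the general case.
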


\begin{remark}
The image of a Galois representation $\rho$ satisfying (1)--(3) need not be abelian, and indeed it is easy to construct a CM abelian variety over an extension $F$ of its reflex field such that the action of Galois on its torsion points is not abelian.

For example, let $A$ over $F=\bQ(i)$ be the square of an elliptic curve $E$ with $\bZ[i]$-action, let $\gamma \colon \Gal_F \to \GL_2(\bZ[i])$ be a homomorphism with non-abelian image, interpret $\gamma$ as a $1$-cocycle and let $A^\gamma$ be the twist of $A$ by $\gamma$ acting via $ \GL_2(\bZ[i]) = \Aut(A)$. Then the representation of $\Gal_F$ on $\rH^1(A^\gamma_{\bar F},\bQ_\ell)$ is isomorphic to the non-abelian representation $V \otimes_{\bQ(i)} \rH^1(E_{\bar F},\bQ_\ell)$ with $V$ the two-dimensional representation over $\bQ(i)$ given by $\gamma$.  Note that the Mumford-Tate group of $A$ is not a maximal torus in $\GSp_4$. 
\end{remark}

\subsection{K3 surfaces}\label{subsec:intro-K3}
Our second theorem is an analogue of Theorem \ref{thm:AV} for K3 surfaces with complex multiplication. The reader who is only interested in abelian varieties and Shimura stacks, can skip ahead to \S~ \ref{subsec:intro-Sh}.

Let $X$ be a K3 surface over $\bC$. Denote by  $V_X$ its ($\bQ$-)transcendental lattice of $X$, which is defined as  the orthogonal complement of $(\Pic X)\otimes \bQ$ inside $\rH^2(X,\bQ(1))$. The endomorphism ring $E := \End V_X$ (in the category of $\bQ$-Hodge structures) is a field \cite{Zarhin83}, and we say that $X$ has \emph{complex multiplication} (by $E$) if $V_X$ is one-dimensional as an $E$-vector space.

Assume that $X$ is a complex K3 surface with complex multiplication by $E$. Then the Mumford-Tate group of the $\bQ$-Hodge structure $V_X$ is the algebraic torus $T$ with
\[
	T(\bQ) := \{ z \in E^\times \mid z\bar z = 1 \}.
\]
As above, class field theory gives a canonical reciprocity map
\[
	\rec\colon \Gal_E \to T(\bA_f)/T(\bQ),
\]
see \S~\ref{sec:reflex-field} for more details. 
 For a subfield $F$ of $\bC$, define $\cB(X,F)$ as the set of isomorphism classes of pairs $(\cX,\alpha)$ with $\cX$ a K3 surface over $F$ and $\alpha$ an isomorphism $\cX_{\bC}\to X$. As for abelian varieties, every such pair $(\cX,\alpha)$ defines a Galois representation $\rho\colon \Gal_F \to \Orth(\rH^2(X,\hat\bZ(1)))$.

\begin{theorem}\label{thm:K3}
Let $F\subset \bC$ be a finite extension of $E$. Then the map
\[
	\cB(X,F) \to \Hom(\Gal_F, \Orth(\rH^2(X,\hat\bZ(1)) )
\]
is injective, and its image consists precisely of those
\[
	\rho\colon \Gal_F\to\Orth(\rH^2(X,\hat\bZ(1)))
\]
satisfying
\begin{enumerate}
\item[(1)] the action of $\Gal_F$ on $V_{X,\bA_f}$ factors over $T(\bA_f) \subset \Orth(V_{X,\bA_f})$ and the diagram
\[
\begin{tikzcd}
\Gal_F \arrow[hook]{r} \arrow{d}{\rho_{|V_{X,\bA_f}}} & \Gal_E \arrow{d}{\rec} \\
T(\bA_f) \arrow{r}  &  T(\bA_f)/T(\bQ)
\end{tikzcd}
\]
commutes,
\item[(2)] the action of $\Gal_F$ preserves $\Pic X \subset \rH^2(X,\hat\bZ(1))$,
\item[(3)] the action of $\Gal_F$ on $(\Pic X)\otimes \bR$ preserves the ample cone
 $K_X$.
\end{enumerate}
\end{theorem}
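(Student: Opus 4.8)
The plan is to deduce the theorem from the reciprocity law for the associated orthogonal Shimura stack, using the realization of the moduli stack of polarized K3 surfaces as an open substack of such a Shimura stack, and then to show that the three conditions (1)--(3) cut out precisely the K3 locus and the correct descent data. I split the argument into injectivity, necessity of (1)--(3), and the (harder) sufficiency.

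\textbf{Injectivity.} I would argue exactly as in Lemma~\ref{lemma:injectivity}. Let $(\cX,\alpha)$ and $(\cY,\beta)$ be models over $F$ inducing the same representation on $\rH^2(X,\hat\bZ(1))$. The isomorphism $\beta^{-1}\alpha\colon\cX_{\bar F}\to\cY_{\bar F}$ is defined over $\bar F$, and for each $\sigma\in\Gal_F$ the isomorphisms $\beta^{-1}\alpha$ and $\sigma^{-1}(\beta^{-1}\alpha)\sigma$ induce the same map on $\rH^2_\et(\cdot,\hat\bZ)$. The key input is that for a K3 surface an isomorphism is determined by its action on second cohomology; this is a consequence of the global Torelli theorem, applied after choosing an embedding $\bar F\hookrightarrow\bC$. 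Hence $\beta^{-1}\alpha$ is $\Gal_F$-invariant, descends to $F$, and the two models agree.

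\textbf{Necessity of (1)--(3).} Given a model $\cX/F$, condition (2) holds because $\Pic X$ is spanned by algebraic classes and the cycle class map is $\Gal_F$-equivariant, so $\Pic X\subset\rH^2(X,\hat\bZ(1))$ is Galois-stable. Condition (3) holds because $\Gal_F$ carries ample classes on $\cX_{\bar F}$ to ample classes, hence preserves the ample cone $K_X$. Condition (1) is the main theorem of complex multiplication for the transcendental part $V_X$: its Mumford--Tate group is the torus $T$, and the canonical law $\rec$ describes the $\Gal_E$-action on $V_{X,\bA_f}$. I would obtain this either from the reciprocity law at special points of the orthogonal Shimura datum, or by passing to the Kuga--Satake abelian variety and invoking the abelian-variety case already treated.

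\textbf{Sufficiency.} Conversely, suppose $\rho$ satisfies (1)--(3). Fixing an ample class presents $X$ as a polarized K3 surface of some degree $2d$, giving a $\bC$-point of the moduli stack $\cM_{2d}$, which I identify with an open substack of an orthogonal Shimura stack $\Sh$ (the folklore fact announced in the abstract). By the reciprocity theorem for Shimura stacks, condition (1) guarantees an $F$-model of the corresponding point of $\Sh$ realizing $\rho$. The crux is to check that this Shimura-stack model lands in the open K3 locus $\cM_{2d}\subset\Sh$ and defines a genuine K3 surface over $F$. Here (2) ensures that the Picard lattice, equivalently the algebraic part of $\rH^2$, is defined over $F$, so the point carries the correct lattice polarization; and (3) ensures the period point lies in the correct chamber, on the ample side of the $(-2)$-walls, so it corresponds to an actual K3 surface with an $F$-rational ample polarization rather than a non-separated or flopped neighbour. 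Surjectivity of the period map then upgrades the period point to a K3 surface $\cX/F$ with $\cX_\bC\cong X$ and representation $\rho$.

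\textbf{Main obstacle.} The hardest step is this last one: the open immersion $\cM_{2d}\hookrightarrow\Sh$ is not surjective, so a CM point of $\Sh$ satisfying only the reciprocity condition (1) need not come from a K3 surface. Showing that (2) and (3) are exactly the supplementary conditions that confine the constructed point to the K3 locus---correctly handling the ample-cone chambers and the $(-2)$-walls, and verifying that the descent produces an honest K3 surface over $F$ rather than a boundary degeneration---is the technical heart of the proof.
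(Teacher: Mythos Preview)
Your overall strategy---reduce to the Shimura-stack reciprocity law via the open immersion $\cM_{2d}\hookrightarrow\Sh_\cK[G_{2d},\Omega^\pm]$---is the paper's strategy, but your sufficiency argument has a real gap, and your ``Main obstacle'' is misidentified.

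The gap is in the sentence ``Fixing an ample class presents $X$ as a polarized K3 surface \ldots\ condition~(1) guarantees an $F$-model of the corresponding point of $\Sh$.'' To invoke Theorem~\ref{thm:Sh} one needs $\rho$ to take values in $g\cK g^{-1}$, and here $\cK$ is the stabilizer of the chosen polarization class $\lambda$. An \emph{arbitrary} ample $L$ on $X$ need not be $\rho(\Gal_F)$-invariant, so $\rho$ need not land in $\cK$, and the Shimura-stack theorem simply does not apply. The actual role of condition~(3) is \emph{upstream} of the descent, not downstream: because the image of $\rho$ in $\Orth(\Pic X)$ is finite (compactness) and $K_X$ is convex, preservation of $K_X$ lets one average an ample class over its orbit to obtain a $\rho$-\emph{invariant} primitive ample $L$ (Proposition~\ref{prop:K3-polarizability}). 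Only with this $L$ does $(X,L)$ give a $\bC$-point $x=(h,1)$ for which $\rho$ stabilizes $\lambda$, and only then can Theorem~\ref{thm:Sh} be applied.

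Conversely, the worry you flag---that the resulting $F$-point might fall outside the K3 locus---is not an issue. Since $\cM_{2d}\hookrightarrow\Sh_\cK[G_{2d},\Omega^\pm]$ is an open immersion of Deligne--Mumford stacks, any $F$-point $\xi$ with $\xi_\bC\cong x\in\cM_{2d}(\bC)$ automatically lies in $\cM_{2d}(F)$; no chamber or $(-2)$-wall analysis is needed at that stage.

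Two further points you pass over: first, conditions~(1) and~(2) together are what translate into condition~(1) of Theorem~\ref{thm:Sh}, via the computation that the centralizer $C$ of $T$ in $\Orth(\rH^2(X,\bQ(1)))$ equals $\Orth((\Pic X)\otimes\bQ)\times T$, so that ``$\rho$ preserves $\Pic X$ and acts on $V_{X,\bA_f}$ through $T(\bA_f)$'' is equivalent to ``$\rho(\Gal_F)\subset C(\bQ)T(\bA_f)$''. Second, one must still verify that $\rho$ lands in the specific $\cK\subset\Orth(\Lambda\otimes\hat\bZ)$, i.e.\ that $\det\rho(\sigma)\in\{\pm1\}$ globally; this uses that $T$ acts with determinant $1$ on $V_X$ and that $\rho$ preserves the integral lattice $\Pic X$.
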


\subsection{CM points on Shimura stacks}\label{subsec:intro-Sh} Theorems \ref{thm:AV} and \ref{thm:K3} will be deduced from a general statement about CM points on Shimura stacks.

Let $(G,X)$ be a Shimura datum, $\cK$ a profinite group and $\cK \to G(\bA_f)$ a continuous homomorphism with open image and finite kernel. Then one can form the complex holomorphic quotient stack
\[
	\Sh_\cK^\an[G,X] :=  \Big[ \, G(\bQ) \,\backslash\, \big( X \times G(\bA_f)/\cK \big)\, \Big].
\]
See \S~\ref{sec:stack} for more details. The groupoid of complex points on $\Sh_\cK^\an[G,X]$ is equivalent to the following groupoid:
\begin{enumerate}
\item objects: pairs $(h,g)$ in $X\times G(\bA_f)$
\item morphisms from $(h_0,g_0)$ to $(h_1,g_1)$: pairs $(\gamma,k) \in G(\bQ) \times \cK$ satisfying
$\gamma h_0 = h_1$ and $\gamma g_0 k = g_1$.
\end{enumerate}
Let $Z$ be the center of $G$. In this paper all Shimura data will satisfy
\begin{equation}
\text{ $Z(\bQ)$ is discrete in $Z(\bA_f)$.} \tag{$\star$}
\end{equation} 
This condition is denoted SV5 in \cite{Milne04}, and is implied by condition (2.1.1.5) in \cite{Deligne79}. Under this condition, the stabilizers in $\Sh_{\cK}^\an[G,X]$ are finite, and this holomorphic stack  can be canonically given the structure of a smooth Deligne-Mumford stack $\Sh_\cK[G,X]$ over the reflex field of $(G,X)$, see \S~\ref{sec:stack} for more details.

\begin{remark}
We have slightly generalized the usual set-up by allowing the map $\cK \to G(\bA_f)$ not to be injective. This will be useful in the proof of Theorem \ref{thm:K3}. In fact, as we will see, polarized K3 surfaces of degree $2$ are parametrized by a Shimura stack for which the map $\cK\to G(\bA_f)$ is not injective.
\end{remark}

Let $x =(h,g)$ be an object in $\Sh_\cK^\an[G,X]$, and let $F$ be a subfield of $\bC$ containing the reflex field of $(G,X)$. By analogy with the above, we define $\cB(x,F)$ to be the set of isomorphism classes of pairs $(\xi,\alpha)$ consisting of an object $\xi$ in $\Sh_\cK[G,X](F)$ and an isomorphism $\alpha \colon \xi_\bC \isomto x$. If $\cK$ is small enough (e.g.~a neat subgroup of $G(\bA_f)$) then $\Sh_\cK[G,X]$ is representable and hence $\cB(x,F)$ consists of at most one element.

Assume that $x=(h,g)$ is a \emph{special point}, so that the Mumford-Tate group of $h$ is a torus $T\subset G$. Let $C\subset G$ be the centralizer of $T$ and let $E$ be the reflex field of $h$. See \S~\ref{sec:some-groups}  and \S~\ref{sec:reflex-field} for a more detailed description of $T$, $C$, and $E$. 

\begin{theorem}\label{thm:Sh} Let $F \subset \bC$ be a finite extension of $E$.
Then there is a natural injective map
\[
	\cB(x,F) \to \Hom(\Gal_F,\, g\cK g^{-1})
\]
whose image consists of those $\rho \colon \Gal_F \to g\cK g^{-1}$ such that
\begin{enumerate}
\item the image of $\rho$ in $G(\bA_f)$ is contained in $C(\bQ) T(\bA_f)$,
\item the diagram
\[
\begin{tikzcd}
\Gal_F \arrow[hook]{r} \arrow{d}{\rho} & \Gal_E \arrow{d}{\rec} \\
C(\bQ)T(\bA_f) \arrow{r}  & T(\bA_f)/T(\bQ)
\end{tikzcd}
\]
commutes.
\end{enumerate}
\end{theorem}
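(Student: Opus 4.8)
The plan is to reduce the statement to an explicit problem in Galois descent for the Deligne--Mumford stack $\Sh_\cK[G,X]$ over $F$, and then to feed in the reciprocity law for the Galois action on the special point $x$.

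First I would make precise the map $\cB(x,F)\to\Hom(\Gal_F, g\cK g^{-1})$. Since $\Sh_\cK[G,X]$ is a smooth DM stack over $E\subseteq F$ (here condition $(\star)$ is used to guarantee finite stabilizers, so that descent is effective), a pair $(\xi,\alpha)\in\cB(x,F)$ is the same datum as the $\bC$-point $x$ equipped with a continuous descent datum for the $\Gal_F$-action on the groupoid $\Sh_\cK[G,X](\bar F)$, transported to $x$ through $\alpha$. Using the groupoid presentation of complex points recalled in the excerpt, such a datum is a family of isomorphisms $\phi_\sigma\colon\sigma\cdot x\isomto x$ indexed by $\sigma\in\Gal_F$, satisfying the cocycle condition $\phi_{\sigma\tau}=\phi_\sigma\circ(\sigma\cdot\phi_\tau)$ together with a continuity condition. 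Writing $x=(h,g)$ and $\phi_\sigma=(\gamma_\sigma,k_\sigma)$ with $\gamma_\sigma\in G(\bQ)$ and $k_\sigma\in\cK$, the $\cK$-component, conjugated by $g$, defines $\rho(\sigma)\in g\cK g^{-1}$; the map of the theorem sends $(\xi,\alpha)$ to $\rho$.

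The key input is the reciprocity law for special points (Deligne's theorem on canonical models, i.e.\ the main theorem of complex multiplication, to be recalled in \S\ref{sec:reflex-field}): for $\sigma\in\Gal_E$ with $\rec(\sigma)=t_\sigma\bmod T(\bQ)$ and a chosen lift $t_\sigma\in T(\bA_f)$, one has $\sigma\cdot(h,g)=(h,t_\sigma g)$. Because $F$ contains $E$, the same formula governs the $\Gal_F$-action, and in particular the component $h$ is fixed. The morphism condition $\gamma_\sigma h=h$ then forces $\gamma_\sigma\in C(\bQ)$, since $C$ is the centralizer of the Mumford--Tate torus $T$ of $h$; and the condition $\gamma_\sigma(t_\sigma g)k_\sigma=g$ in $G(\bA_f)$ shows that the image of $k_\sigma$ is $g^{-1}t_\sigma^{-1}\gamma_\sigma^{-1}g$ (here $\gamma_\sigma$ and $t_\sigma$ commute, as $C$ centralizes $T$). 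Hence the image of $\rho(\sigma)$ in $G(\bA_f)$ equals $\gamma_\sigma^{-1}t_\sigma^{-1}\in C(\bQ)T(\bA_f)$, which is condition (1); its class under $C(\bQ)T(\bA_f)\to T(\bA_f)/T(\bQ)$ is $t_\sigma^{-1}\bmod T(\bQ)$, which is well-defined because $C(\bQ)\cap T(\bA_f)=T(\bQ)$ and equals $\rec(\sigma)$ up to the standard inversion convention, giving the commuting diagram of condition (2). Finally, the cocycle condition on $(\phi_\sigma)$ translates into $\rho$ being a homomorphism.

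It then remains to run this correspondence in reverse and to establish injectivity. For surjectivity onto the described image, given $\rho$ satisfying (1) and (2) I would, for each $\sigma$, factor the image of $g^{-1}\rho(\sigma)g$ in $G(\bA_f)$ as $\gamma_\sigma^{-1}t_\sigma^{-1}$ with $\gamma_\sigma\in C(\bQ)$ and $t_\sigma\in T(\bA_f)$ --- possible by (1) and unique up to $T(\bQ)$ --- and verify, using (2) to match $t_\sigma$ with $\rec(\sigma)$, that $(\gamma_\sigma,k_\sigma)$ is a continuous cocycle, hence a descent datum defining a model over $F$. Injectivity is the stacky analogue of Lemma~\ref{lemma:injectivity}: two descent data inducing the same $\rho$ differ by automorphisms of $x$ acting trivially on the $\cK$-frame, and one checks these give the same class in $\cB(x,F)$. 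I expect the principal obstacle to be exactly this stacky bookkeeping: since $\cK\to G(\bA_f)$ has finite kernel and $x$ may carry nontrivial (finite) automorphisms, one must confirm that the passage between the groupoid of descent data and plain homomorphisms into $g\cK g^{-1}$ is faithful and surjects onto conditions (1)--(2), and that the ambiguity in the factorization $\gamma_\sigma^{-1}t_\sigma^{-1}$ is controlled precisely by $C(\bQ)\cap T(\bA_f)=T(\bQ)$. Throughout, condition $(\star)$ is what keeps the relevant automorphism groups finite and the descent well-behaved.
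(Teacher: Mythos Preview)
Your approach is sound and uses the same essential ingredients as the paper --- the reciprocity law for special points, the identification of the stabilizer of $h$ with $C(\bQ)$ (the paper's Lemma~\ref{lemma:stab-is-centralizer}), and the fact that $C(\bQ)\cap T(\bA_f)=T(\bQ)$ --- but the organization is genuinely different. You work directly with descent data on $\Sh_\cK[G,X]$, writing out cocycles $\phi_\sigma=(\gamma_\sigma,k_\sigma)$ and reading off $\rho$ from the $\cK$-component. The paper instead constructs a closed immersion
\[
\Sh_\cU[T,\{h\}]\hookrightarrow\Sh_\cK[G,X],\qquad \cU:=g\cK g^{-1}\times_{G(\bA_f)}C(\bQ)T(\bA_f),
\]
reducing $\cB(x,F)$ to the corresponding set for a zero-dimensional stack, and then invokes a general lemma (Lemma~\ref{lemma:finetquot}) characterizing $F$-points of a finite \'etale quotient stack $[X_2/\Gamma_1]$ as lifts $\Gal_F\to\Gamma_1$ of a given $\rho_2\colon\Gal_F\to\Gamma_2$. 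A final cartesian-square argument upgrades $\bar\rho\colon\Gal_F\to\cU/\cU_0$ to $\rho\colon\Gal_F\to\cU$.

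What each buys: the paper's reduction cleanly packages away exactly the bookkeeping you flag as the ``principal obstacle''. In your approach you must make precise how $\Gal_F$ acts on \emph{morphisms} in the groupoid presentation (not just on isomorphism classes), pin down $\sigma^*x$ as an honest object rather than an isomorphism class, and verify that the cocycle condition on $(\phi_\sigma)$ is literally equivalent to $\rho$ being a homomorphism --- all doable, but fiddly, and the inversion convention you note must be tracked. The paper sidesteps this by passing to a stack where objects are points of a finite \'etale scheme and morphisms are elements of a finite constant group, so the Galois action on morphisms is transparently trivial and Lemma~\ref{lemma:finetquot} applies on the nose. Your route, on the other hand, is more self-contained and avoids introducing the auxiliary stack $\Sh_\cU[T,\{h\}]$ and checking that $\iota$ is a closed immersion.
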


As in Theorem \ref{thm:AV}, the lower horizontal map in (2) given by $ct \mapsto t$.

\begin{remark}The notation $g\cK g^{-1}$ should be interpreted as the fiber product
\[
\begin{tikzcd}
g\cK g^{-1} \arrow{r} \arrow{d} & G(\bA_f) \arrow{d}{g^{-1}(-)g} \\
\cK \arrow{r} & G(\bA_f)
\end{tikzcd}
\]
Of course, if $\cK \to G(\bA_f)$ is injective, then $g\cK g^{-1}$ is just the conjugated subgroup.
\end{remark}

\begin{remark}\label{rmk:lift}
Consider the fiber product
\[
	g\cK g^{-1} \times_{G(\bA_f)} C(\bQ)T(\bA_f)
\]
(if $\cK \to G(\bA_f)$ is injective, then  this fiber product is  the intersection $ g\cK g^{-1} \cap C(\bQ)T(\bA_f)$ in $G(\bA_f)$), and consider the  homomorphism
\[
	 g\cK g^{-1} \times_{G(\bA_f)} C(\bQ)T(\bA_f) \overset\delta\longto T(\bA_f)/T(\bQ),
\]
given by $(gkg^{-1},ct) \mapsto t$.  The set of $\rho$ satisfying the two conditions in the theorem is the set of
\[
	\rho \colon \Gal_F \to  g\cK g^{-1} \times_{G(\bA_f)} C(\bQ)T(\bA_f)
\]
\emph{lifting} the reciprocity map $\Gal_F \to T(\bA_f)/T(\bQ)$ along $\delta$. As such, the standard observations about lifting group homomorphisms apply:
\begin{enumerate}
\item If $\rho_0$ and $\rho_1$ are two lifts, then $\rho_1/\rho_0 \colon \Gal_F \to \ker \delta$ is a $1$-cocycle (corresponding to the fact that any two $F$-points $\xi$ under a given $\bC$-point $x$ are twists of each other under $\Aut x = \ker \delta$, see Corollary \ref{cor:automorphism-group}).
\item A necessary condition for $\rho$ to exist is that the image of $\Gal_F$ in $\coker \delta$ vanishes (this implies, for example, the well-known fact that the field of definition of a CM elliptic curve $A$ contains the Hilbert class field of the quadratic imaginary field $\bQ\otimes \End A$).
\item Even if the above condition is satisfied, a lift $\rho$ need not exist. For example, if $\ker \delta$ is abelian, then there is an obstruction in $\rH^2(\Gal_F, \ker \delta)$ against the existence of a lift.
\end{enumerate}
\end{remark}

\begin{remark}If the Mumford-Tate group $T$ is a \emph{maximal} torus in $G$, then $T=C$ and every $\rho$ in the theorem has abelian image.
\end{remark}

\subsection{About the proofs}

As one may expect, Theorem \ref{thm:Sh} is a rather formal consequence of the reciprocity law describing the action of Galois on CM points on Shimura varieties. If $\cK$ is a neat subgroup of $G(\bA_f)$ then $\Sh_{\cK}[G,X]$ is representable (so that there is at most one element in $\cB(x,F)$) and the map
$\delta$ of Remark \ref{rmk:lift} is injective (so that there is at most one $\rho$ making the square in Theorem \ref{thm:Sh} commute).  In this case, the theorem boils down to a tautology: it describes the action of Galois on CM points on Shimura varieties that is used to \emph{define} the canonical model of $\Sh_\cK(G,X)$. The general case is essentially obtained by descent along the $\cK/\cK_0$-torsor $\Sh_{\cK_0}(G,X) \to \Sh_{\cK}[G,X]$ for a neat normal $\cK_0 \subset \cK$. 

Theorems \ref{thm:AV} and \ref{thm:K3} are obtained from Theorem \ref{thm:Sh} using the fact that the moduli stacks of polarized abelian varieties (resp.~of polarized K3 surfaces) are Shimura stacks (resp.~open substacks of Shimura stacks). Condition (3) in both Theorem \ref{thm:AV} and Theorem \ref{thm:K3} arrange passage from the `unpolarized' setting of representations with
image in groups of type $\GL(2g)$ or $\Orth(3,19)$, to the polarized Shimura setting, with groups of type
$\GSp(2g)$ or $\SO(2,19)$.

In order to show the result on K3 surfaces, we use that the stack of polarized K3 surfaces over $\bQ$ forms an open substack of a Shimura stack. The precise result seems to be missing from the literature.
As this may be of independent interest, we give a detailed description in \S~\ref{subsec:K3-Shimura} and \S~\ref{subsec:level-structures}. Our treatment is heavily inspired by Andr\'e \cite{Andre96a} and Madapusi Pera \cite{MadapusiPera15}, whose work make it clear that one should not only consider the intersection pairing on $\rH^2$ of a K3 surface, but also the trivializations of its determinant.

\section{The stack $\Sh_{\cK}[G,X]$} \label{sec:stack}

\subsection{Definition}
Let $(G,X)$ be a Shimura datum satisfying ($\star$). Let $\cK$ be a profinite group and let $\cK\to G(\bA_f)$ be a continuous homomorphism with open image and finite kernel. The holomorphic quotient stack
\[
	 \Sh_\cK^\an[G,X] :=  \Big[ \, G(\bQ) \,\backslash\, \big( X \times G(\bA_f)/\cK \big)\, \Big]
\]
is a finite disjoint union of stacks of the form $[\Gamma \backslash X]$, with $\Gamma$ acting via a homomorphism $\Gamma \to G(\bQ)$ with finite kernel and image an arithmetic subgroup.

The isomorphism classes in $\Sh_\cK^\an[G,X]$ are the double cosets in
\[
	 \Sh_\cK^\an(G,X) :=  G(\bQ) \,\backslash\, \big( X \times G(\bA_f)/\cK \big),
\]
 and the stabilizer $\Aut x$ of a point $x=(h,g)$ is given by the cartesian square
\[
\begin{tikzcd}
\Aut x \arrow{r} \arrow{d} & g\cK g^{-1} \arrow{d} \\
G(\bQ)_h \arrow{r} & G(\bA_f) 
\end{tikzcd}
\]
where $G(\bQ)_h$ denotes the stabilizer of $h\in X$ in $G(\bQ)$.
If $\cK$ is a neat subgroup of $G(\bA_f)$, then the stabilizers are trivial and $\Sh_\cK^\an[G,X]$ is represented by $\Sh_\cK^\an(G,X)$, the analytification of the (smooth, quasi-projective) Shimura variety $\Sh_\cK(G,X)$. 

Choose an open normal subgroup $\cK_0\subset \cK$ such that $\cK_0\to G(\bA_f)$ is injective with neat image. Then the group $\cK/\cK_0$ acts on the Shimura variety $\Sh_{\cK_0}(G,X)$, and this action is defined over the reflex field $E(G,X)$. We define the \emph{Shimura stack} $\Sh_{\cK}[G,X]$ as the quotient stack
\[
	\Sh_\cK[G,X] :=  \big[ \Sh_{\cK_0}(G,X) / (\cK/\cK_0) \, \big],
\]
over $E$. This is independent of the choice of $\cK_0$, and we have $\Sh_\cK[G,X](\bC) = \Sh_\cK^\an[G,X]$. The stack  $\Sh_{\cK}[G,X]$ is a smooth Deligne-Mumford stack over $E$.

\begin{example}[Elliptic curves]\label{exa:elliptic-curves}
 The standard example is the stack of elliptic curves. Take $G=\GL_2$, $X=\cH^\pm$ the double upper half plane, and $\cK = \GL_2(\hat\bZ) \subset G(\bA_f)$. The reflex field of $(G,Q)$ is $\bQ$ and $\Sh_{\cK}[G,X]$ is canonically isomorphic to the Deligne-Mumford stack $\Ell_\bQ$ of elliptic curves over $\bQ$. The analytification is the quotient stack
\[
	\Sh_{\cK}[G,X]^\an = \big[ \SL_2(\bZ) \backslash \cH^+ \big].
\]
Every point has a non-trivial finite stabilizer (the automorphism group of the corresponding elliptic curve). The $j$-invariant induces a commutative diagram of stacks over $\Spec \bQ$ 
\[
\begin{tikzcd}
	\Sh_{\cK}[G,X] \arrow{d} \arrow{r}{\cong} & \Ell_\bQ  \arrow{d}{j} \\
	 \Sh_{\cK}(G,X) \arrow{r}{\cong} & \bA^1_\bQ
\end{tikzcd}
\]
See \S~\ref{subsec:K3-Shimura} for a natural example where $\cK\to G(\bA_f)$ is not injective.
\end{example}

\subsection{Variations of Hodge structures on Shimura stacks}\label{subsec:VHS}

Let $V$ be a finite dimensional $\bQ$-vector space equipped with an algebraic left action of $G$ and a continuous right action of $\cK$ (necessarily with finite image). Assume that these actions commute. Then  the double quotient 
\[	
 \Big[ \, G(\bQ) \,\backslash\, \big( X \times V \times G(\bA_f) \big) / \cK \, \Big] \longto
 \Sh_\cK[G,X]
\]
defines a variation of $\bQ$-Hodge structures $\cV_\bQ$ on $\Sh_{\cK}[G,X]$. This is slightly more general than the usual construction of variations of Hodge structures on Shimura varieties, where the action of $\cK$ on $V$ is trivial. The added flexibility will be of use in our treatment of K3 surfaces.

Now let $V_{\hat\bZ} \subset V\otimes\bA_f$ be a $\hat\bZ$-lattice and assume
\[
	\iota(k)^{-1} v k \in V_{\hat\bZ} \text{ for all }v \in V_{\hat\bZ},
\]
where  $\iota$ is the map $\cK \to G(\bA_f)$. Then  the double quotient 
\[
	\Big[ \, G(\bQ) \,\backslash\, 
	\big\{ (h,v,g) \in X \times V \times G(\bA_f) \mid g^{-1}v \in V_{\hat\bZ} \big\} / \cK \, \Big]
\] 
defines a variation of $\bZ$-Hodge structures $\cV_{\bZ} \subset \cV_\bQ$. 

\section{Proof of Theorem \ref{thm:Sh} (Shimura stacks)}

\subsection{Some groups associated with a CM point}\label{sec:some-groups}
Let $(G,X)$ be a Shimura datum satisfying ($\star$), let $\cK$ a profinite group, and let $\cK \to G(\bA_f)$ be a continuous homomorphism with finite kernel and open image. Let $x=(h,g)$ be an object in $\Sh_{\cK}[G,X](\bC)$.  We say that $x$ is a \emph{special point} or \emph{CM point} if its Mumford-Tate group is an algebraic torus.

Let $x=(h,g)\in \Sh_{\cK}(G,X)(\bC)$ be a special point with Mumford-Tate group $T$. Let $C\subset G$ be the centralizer of $T$. 

\begin{lemma}\label{lemma:stab-is-centralizer}
The stabilizer of $h\in X$ in $G(\bQ$) is $C(\bQ)$.
\end{lemma}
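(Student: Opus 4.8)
The plan is to identify $G(\bQ)_h$ with the $\bQ$-points of the centralizer of the image $h(\bS)\subset G_\bR$, and then to exploit the defining minimality property of the Mumford--Tate group $T$. Throughout, write $Z_G(\cdot)$ for the centralizer in $G$ of a subgroup or element.

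First I would unwind the action on $X$. Since $X$ is a $G(\bR)$-conjugacy class of morphisms $h\colon \bS \to G_\bR$, with $g$ acting by $h\mapsto \mathrm{int}(g)\circ h$, an element $\gamma\in G(\bQ)$ fixes $h$ precisely when $\mathrm{int}(\gamma)\circ h = h$, i.e.\ when $\gamma$ commutes with $h(s)$ for all $s\in\bS(\bR)$. As $\bS(\bR)$ is Zariski dense in $\bS$ and $Z_G(\gamma)$ is closed, this is equivalent to $h(\bS)\subseteq Z_G(\gamma)_\bR$, so
\[
	G(\bQ)_h = Z_G(h(\bS))(\bQ).
\]
It therefore suffices to prove the equality of $\bQ$-subgroups $Z_G(h(\bS)) = C$, where $C = Z_G(T)$.

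The inclusion $C(\bQ)\subseteq G(\bQ)_h$ is immediate: by definition of the Mumford--Tate group one has $h(\bS)\subseteq T_\bR$, so anything centralizing $T$ centralizes $h(\bS)$, whence $C = Z_G(T)\subseteq Z_G(h(\bS))$. For the reverse inclusion I would use that $T=\MT(h)$ is the \emph{smallest} $\bQ$-subgroup of $G$ through whose real points $h$ factors. Let $\gamma\in G(\bQ)$ centralize $h(\bS)$. Because $\gamma$ is a $\bQ$-point, its centralizer $Z_G(\gamma) = \{g\mid g\gamma = \gamma g\}$ is a closed algebraic subgroup defined over $\bQ$, and the hypothesis says exactly that $h$ factors through $Z_G(\gamma)_\bR$. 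Minimality of $T$ then forces $T\subseteq Z_G(\gamma)$, which is the statement that $\gamma$ commutes with all of $T$; equivalently $\gamma\in Z_G(T)(\bQ) = C(\bQ)$.

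The one point requiring care --- and the crux of the argument --- is the passage from ``$\gamma$ centralizes $h(\bS)$'' to ``$\gamma$ centralizes $T$''. This is not a formal manipulation: it rests on the fact that $Z_G(\gamma)$ is \emph{defined over $\bQ$} (so that the minimality property of the Mumford--Tate group can be invoked), combined with the elementary but essential equivalence between $T\subseteq Z_G(\gamma)$ and $\gamma\in Z_G(T)$. Everything else is a direct unwinding of the definitions of the Shimura datum and of $\MT(h)$.
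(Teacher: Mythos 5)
Your proof is correct and rests on exactly the same key idea as the paper's, namely the minimality characterization of the Mumford--Tate group applied to a $\bQ$-subgroup through which $h$ factors. The only (cosmetic) difference is that the paper runs the minimality argument inside $T$, via the fixed subtorus of the automorphism of $T$ induced by $\gamma$, whereas you apply it to the centralizer $Z_G(\gamma)$ in $G$ directly.
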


\begin{proof}
By assumption, $T\subset G$ is the smallest $\bQ$-torus such that $h\colon \bS \to G_\bR$ factors over $T_\bR$. Clearly $C(\bQ)$ stabilizes $h$. Conversely, assume that $\gamma \in G(\bQ)$ stabilizes $h$. Then it stabilizes $T$, and induces an automorphism $\sigma\in \Aut T$, defined over $\bQ$. It now suffices to show that $\sigma$ is the identity.

Let $T_0\subset T$ be the connected component of identity of the subgroup fixed by $\sigma$. This is a proper sub-torus of $T$ and by construction the morphism $h$ factors over $T_0(\bR)$. It follows that $T_0=T$, and hence $\sigma=\id_T$.
\end{proof}

\begin{corollary}\label{cor:automorphism-group}
The stabilizer of $(h,g) \in \Sh_{\cK}^\an[G,X]$ is 
 $g\cK g^{-1} \times_{G(\bA_f)} C(\bQ)$.\qed
\end{corollary}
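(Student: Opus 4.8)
The plan is to combine the two structural facts already in place, since the corollary is a direct consequence of them. First I would recall the general description of the stabilizer established in \S\ref{sec:stack}: for any object $x=(h,g)$ of $\Sh_\cK^\an[G,X]$ the automorphism group sits in the cartesian square exhibiting $\Aut x$ as $g\cK g^{-1} \times_{G(\bA_f)} G(\bQ)_h$. This is read off from the explicit groupoid of $\bC$-points given earlier: an automorphism of $(h,g)$ is a pair $(\gamma,k)\in G(\bQ)\times\cK$ with $\gamma h = h$ and $\gamma g k = g$, where the first condition says exactly $\gamma\in G(\bQ)_h$ and the second pins the image of $k$ in $G(\bA_f)$ to $g^{-1}\gamma^{-1}g$, so that $\gamma$ and $\iota(k)$ determine one another and the pair is precisely a point of the fiber product.

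Second, because $x$ is a special point with Mumford-Tate torus $T$ and $C$ is the centralizer of $T$ in $G$, Lemma \ref{lemma:stab-is-centralizer} gives the equality $G(\bQ)_h = C(\bQ)$ of subgroups of $G(\bQ)$. Substituting this into the cartesian square immediately yields
$\Aut x = g\cK g^{-1} \times_{G(\bA_f)} C(\bQ)$,
which is the assertion of the corollary.

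The only point meriting a word of care is that this substitution is legitimate: both $G(\bQ)_h$ and $C(\bQ)$ map to $G(\bA_f)$ through one and the same inclusion $G(\bQ)\hookrightarrow G(\bA_f)$ (restricted along $C(\bQ)\subset G(\bQ)$), so the equality of subgroups of $G(\bQ)$ upgrades to an equality of objects over $G(\bA_f)$, and the two fiber products coincide on the nose. Consequently there is no genuine obstacle to overcome here; the statement is a formal one-line consequence of the preceding lemma together with the stabilizer computation, which is exactly why it is phrased as a corollary with a \texttt{\textbackslash qed} in the statement line.
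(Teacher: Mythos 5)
Your proposal is correct and is exactly the argument the paper intends: the cartesian square from \S 2.1 identifies $\Aut(h,g)$ with $g\cK g^{-1}\times_{G(\bA_f)} G(\bQ)_h$, and Lemma \ref{lemma:stab-is-centralizer} replaces $G(\bQ)_h$ by $C(\bQ)$. This is precisely why the paper states the corollary with an immediate \texttt{\textbackslash qed}.
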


The product $C(\bQ)T(\bA_f)$ is a subgroup of $G(\bA_f)$, and we have a well-defined group homomorphism
\[
	g\cK g^{-1} \times_{G(\bA_f)} C(\bQ) T(\bA_f) \overset{\delta}\longto T(\bQ)\backslash T(\bA_f)
\]
given by $(gkg^{-1},ct) \mapsto t$. Both kernel and cokernel of $\delta$ are finite. By Corollary \ref{cor:automorphism-group} the kernel is the stabilizer of $(h,g)$ in the stack $\Sh_{\cK}^\an[G,X]$.

\subsection{Reflex field and reciprocity map}\label{sec:reflex-field}
Let $T$ be an algebraic torus over $\bQ$ and let $h\colon \bS \to T_\bR$ be a morphism of tori over $\bR$. With $h$ is associated a number field
$E \subset \bC$ called the \emph{reflex field} of $h$, and a map of  $\bQ$-tori
\[
	t\colon \Res_{E/\bQ} \bG_{m,E} \longto T.
\]
The subfield $E \subset \bC$ and the map $t$ can be characterized as follows. There is a unique isomorphism of
$\bC$-tori
\[
	\bS_\bC \longisomto \bG_{m,\bC} \times \bG_{m,\bC}
\]
such that $z \in \bC^\times = \bS(\bR) \subset \bS(\bC)$ is mapped to $(z,\bar z)$. The map $t$ has the property that the base change of the composite map 
\[
	\bG_{m} \longto \Res_{E/\bQ} \bG_{m,E} \overset{t}{\longto} T
\]
from $\bQ$ to $\bC$ coincides with the map of $\bC$-tori
\[
	\bG_{m,\bC} \overset{(z,1)}{\longto} \bS_\bC \overset{h}{\longto} T_\bC.
\]
The reflex field $E$ is the unique minimal field $E$ for which such $t$ exists.
The group $T(\bA_f)/\widebar{T(\bQ)}$ is profinite, so  the composition
\[
	\bA_E^\times/E^\times \overset{t}{\longto} T(\bA)/T(\bQ) 
	\longto T(\bA_{f})/\widebar{T(\bQ)}
\]
factors over the profinite completion of $ \bA_E^\times/E^\times $, which class field theory identifies with $\Gal_E^\ab$. We obtain a homomorphism
\[
	\rec\colon \Gal_E \to T(\bA_f)/\widebar{T(\bQ)}
\]
which (by a slight abuse of terminology) we will call the reciprocity map. In this paper, $T(\bQ)$ will always be discrete in $T(\bA_{f})$, so that $\rec$ will be a homomorphism $\Gal_E \to T(\bA_f)/T(\bQ)$.

\subsection{Recap on canonical models}

We summarize some of the main results on existence of canonical models of Shimura varieties \cite{Shimura70,Deligne79,Borovoi82,Milne83}.

With a Shimura datum $(G,X)$ is associated a canonical number field $E(G,X) \subset \bC$ called the reflex field. For every compact open subgroup $\cK\subset G(\bA_f)$ a model of $\Sh_\cK(G,X)$ over $E(G,X)$. 
These were constructed for a large class of $(G,X)$ in \cite[Cor.~2.7.21]{Deligne79}, and later for arbitrary Shimura data in \cite{Borovoi82,Milne83}.

These models are functorial in the following sense:  for every morphism of Shimura data $\phi\colon(G_1,X_1) \to (G_2,X_2)$ and for every pair of compact open subgroups $\cK_i\subset G_i(\bA_f)$ with $\phi(\cK_1) \subset \cK_2$
we have $E(G_2,X_2) \subset E(G_1,X_1)$ and the induced map
\[
	\Sh_{\cK_1}(G_1,X_1) \to \Sh_{\cK_2}(G_2,X_2)
\]
is defined over $E(G_1,X_1)$. The collection of all these models satisfies (and is characterized by) the following property (which is part of the definition of a canonical model, see \cite[\S~2.2]{Deligne79}).

\begin{theorem}[Reciprocity law]\label{thm:reciprocity}
Let $(T,\{h\})$ be a Shimura datum with $T$ a torus. Let $E$ be its reflex field. Let $\cK \subset T(\bA_f)$ be a compact open subgroup with $\cK \cap T(\bQ) = \{1\}$.
Then $\Sh_\cK(T,\{h\})$ is a finite \'etale scheme over $E$ and the action of $\Gal_E$ on
its $\bC$ points
\[
	\Sh_\cK^\an(T,\{h\}) =  T(\bQ) \backslash T(\bA_f)/\cK
\]
is obtained by composing the multiplication action of $\widebar{T(\bQ)} \backslash T(\bA_f)$ with
the  reciprocity homomorphism $\rec\colon\Gal_E \to \widebar{T(\bQ)} \backslash T(\bA_f)$.\qed
\end{theorem}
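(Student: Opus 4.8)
The plan is to exploit that the Shimura variety here is zero-dimensional, so that the only content is a continuous action of $\Gal_E$ on a finite set. Since $G=T$ is a torus, $X=\{h\}$ is a single point, every point of $\Sh_\cK(T,\{h\})$ is special, and the complex points form the finite set $T(\bQ)\backslash T(\bA_f)/\cK$ (finite by the finiteness of the class number of $T$). As a finite \'etale $E$-scheme is the same datum as a finite continuous $\Gal_E$-set, the task is to produce the correct action and to match it with $\rec$. Granting the general existence of canonical models, this matching is essentially the reciprocity axiom that characterizes them (see \cite[\S2.2]{Deligne79}), so the genuine content is (a) the existence of a model over $E$ and (b) the identification of its Galois action with the idelic recipe $\rec$. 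For (b) I would first recall from \S\ref{sec:reflex-field} that the cocharacter $\mu_h\colon \bG_{m,\bC}\to T_\bC$, $z\mapsto h_\bC(z,1)$, is defined over $E$ and gives the reflex norm $t\colon \Res_{E/\bQ}\bG_{m,E}\to T$; composing $t$ on finite id\`eles with Artin reciprocity yields $\rec\colon \Gal_E\to T(\bA_f)/\overline{T(\bQ)}$, valued in $T(\bA_f)/T(\bQ)$ under ($\star$). That $\rec$ is well defined and continuous, hence really describes a finite \'etale $E$-scheme, is formal.

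The heart of the matter, and where I expect the real work, is to identify this idelic recipe with the honest Galois action. The strategy I would follow is reduction to the main theorem of complex multiplication. Choosing a faithful symplectic $\bQ$-representation $(V,\psi)$ of $T$ for which $h$ defines a polarized Hodge structure realizes the special point as the moduli point of a complex abelian variety $A$ with complex multiplication and embeds $(T,\{h\})$ into a Siegel datum $(\GSp(V,\psi),\cH^\pm)$. Under the induced morphism of Shimura varieties the point maps to $[A]$ together with its level structure, and the Shimura--Taniyama main theorem of complex multiplication (in the form refined by Deligne, Langlands, and Tate) computes the $\Gal_E$-action on $[A]$ and on the torsion of $A$ precisely through the reflex norm of the acting id\`ele. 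Transporting this back along the embedding yields the asserted action on $T(\bQ)\backslash T(\bA_f)/\cK$. Uniqueness of canonical models --- special points being dense and the reciprocity condition pinning the model down --- then shows that the $E$-structure so obtained is the canonical one, and independence of the auxiliary choices (the representation $(V,\psi)$ and the subgroup $\cK_0$) is then formal.

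The main obstacle is twofold. First and most seriously it is the main theorem of complex multiplication together with the bookkeeping of its normalization: one must be careful with the sign conventions relating $\mu_h$ and its conjugate, and with using class field theory in its arithmetically normalized form, since the wrong normalization replaces $\rec$ by its inverse. Second, a general torus $T$ need not be the full CM torus of a single abelian variety, so one must either justify the existence of a faithful symplectic representation realizing $h$, or bypass the moduli interpretation altogether: invoke the general existence theorem of \cite{Borovoi82,Milne83} to obtain a model of $\Sh_\cK(T,\{h\})$ over $E$, and then verify reciprocity on the dense set of special points, where functoriality reduces it to the CM case above. Either route yields the statement, and in both the genuinely hard input is the CM reciprocity law.
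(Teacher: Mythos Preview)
The paper does not prove this theorem: note the \texttt{\textbackslash qed} at the end of the statement and the preceding sentence, which says that this property ``is part of the definition of a canonical model, see \cite[\S~2.2]{Deligne79}.'' In other words, the paper treats Theorem~\ref{thm:reciprocity} as the \emph{defining} property of the canonical model of a zero-dimensional Shimura variety, and invokes the existence results of \cite{Shimura70,Deligne79,Borovoi82,Milne83} as a black box. There is nothing to compare your argument to.

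Your proposal goes well beyond this: you are sketching, in effect, how the existence theorem for canonical models is actually established in the torus case (embed into a Siegel datum, invoke the main theorem of CM, transport back). That outline is correct in spirit and is indeed the route taken in the literature, but it is not what the paper does, and it is not what is being asked for here. In the logic of the present paper, the reciprocity law for $(T,\{h\})$ is an input, not a result to be proven; the paper's own contribution (Theorem~\ref{thm:Sh}) is the descent from a neat level $\cK_0$ to a general $\cK$, for which Theorem~\ref{thm:reciprocity} is simply quoted.

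If you were writing this paper, the appropriate ``proof'' would be a one-line citation, exactly as the author has done.
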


\subsection{A lemma on finite \'etale quotient stacks}

Let $F$ be a field with separable closure $\bar F$. Let $f\colon \Gamma_1 \to \Gamma_2$ be a homomorphism of finite groups, and let $\rho_2\colon \Gal_F \to \Gamma_2$ be a continuous homomorphism. Then $\rho_2$ determines a finite \'etale $F$-scheme $X_2$ with an identification $X_2(\bar F) = \Gamma_2$. 

If $\Gamma_2$ is moreover commutative, then $\Gamma_1$ acts on $X_2$, and we have a quotient stack $\cX := [ X_2/\Gamma_1 ]$. The unit element $1 \in \Gamma_2 = X_2(\bar F)$ determines an object
$x$ in $\cX(\bar F)$. Let $\cB(x,F)$ be the set of isomorphism classes of pairs $(\xi,\alpha)$ with $\xi$ an object of $\cX(F)$ and $\alpha \colon \xi_{\bar F} \to x$.

\begin{lemma} \label{lemma:finetquot}
There is a natural bijection between $\cB(x,F)$ and the set of $\rho_1\colon \Gal_F \to \Gamma_1$ making the diagram
\[
\begin{tikzcd}
& \Gal_F \arrow[dashed,swap]{dl}{\rho_1} \arrow{d}{\rho_2} \\
\Gamma_1 \arrow{r}{f} & \Gamma_2
\end{tikzcd}
\]
commute.
\end{lemma}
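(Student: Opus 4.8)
The plan is to identify both sides of the asserted bijection with the set of Galois descent cocycles on $x$, and then observe that, because the coefficient group $\Gamma_1$ carries the trivial Galois action, these cocycles are exactly the homomorphisms in question. First I would make the groupoid $\cX(\bar F)$ fully explicit. As $\bar F$ is separably closed, every $\Gamma_1$-torsor over it is trivial, so an object of $\cX(\bar F)=[X_2/\Gamma_1](\bar F)$ is simply an element $g\in X_2(\bar F)=\Gamma_2$, a morphism $g\to g'$ is an element $\gamma\in\Gamma_1$ with $f(\gamma)g=g'$, and in particular $\Aut x=\ker f$ for the base object $x$ corresponding to $1\in\Gamma_2$. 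The group $\Gal_F$ acts on this groupoid through $\rho_2$, by translation on objects and trivially on the constant morphism group $\Gamma_1$; commutativity of $\Gamma_2$ is exactly what makes this action well defined and commuting with the $\Gamma_1$-action, so that $[X_2/\Gamma_1]$ really is defined over $F$. In these terms $\sigma(x)$ is the object $\rho_2(\sigma)^{\pm1}\in\Gamma_2$.

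Next I would invoke Galois descent for the finite \'etale quotient stack $\cX$. Descent along $\bar F/F$ is effective here, because $\cX$ is the quotient of a scheme (the finite \'etale $F$-scheme $X_2$) by the finite group $\Gamma_1$; concretely an object of $\cX(F)$ is a $\Gamma_1$-torsor over $F$ together with an equivariant map to $X_2$, and such data descend. Consequently the groupoid of pairs $(\xi,\alpha)$ with $\xi\in\cX(F)$ and $\alpha\colon\xi_{\bar F}\isomto x$ is equivalent to the groupoid of descent data on $x$: locally constant families $(v_\sigma)_{\sigma\in\Gal_F}$ of isomorphisms $v_\sigma\colon\sigma(x)\to x$ in $\cX(\bar F)$ satisfying $v_{\sigma\tau}=v_\sigma\circ\sigma(v_\tau)$. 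The crucial point is that $\alpha$ is remembered as part of the datum: an isomorphism of pairs must be the identity on $x$, so isomorphism classes of pairs biject with such cocycles \emph{on the nose}, with no passage to a cohomology set. This is precisely why $\cB(x,F)$ is a bare set of homomorphisms rather than an $H^1$.

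It then remains to translate the cocycle into a homomorphism. By the explicit description above, $v_\sigma\colon\sigma(x)\to x$ is an element $\gamma_\sigma\in\Gamma_1$ lying in the fibre of $f$ over $\rho_2(\sigma)$, and, unwinding $\sigma(v_\tau)$ (recall $\Gal_F$ acts trivially on $\Gamma_1$ and by translation on objects) and composing in the groupoid, the cocycle identity $v_{\sigma\tau}=v_\sigma\circ\sigma(v_\tau)$ becomes $\gamma_{\sigma\tau}=\gamma_\sigma\gamma_\tau$. Thus $\rho_1\colon\sigma\mapsto\gamma_\sigma$ is a continuous homomorphism $\Gal_F\to\Gamma_1$ with $f\circ\rho_1=\rho_2$, and conversely any such $\rho_1$ defines a descent cocycle; the assignment is manifestly natural and bijective, which is the claim.

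I expect the one genuine input to be the effectivity of descent for $\cX$, which is standard for a quotient of a scheme by a finite group, and the main nuisance to be the bookkeeping of conventions --- left versus right translations of $\Gal_F$, $\Gamma_1$ and $\Gamma_2$ on $X_2$, together with the direction chosen for the descent cocycle. These choices, and nothing deeper, decide whether the resulting lift satisfies $f\circ\rho_1=\rho_2$ or its pointwise inverse; commutativity of $\Gamma_2$ is what guarantees that the two are interchangeable and that the cocycle identity collapses to the homomorphism identity, so the precise form in the statement can always be arranged.
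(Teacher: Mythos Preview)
Your proposal is correct. Both arguments rest on the same underlying fact---that a $\Gamma_1$-torsor over $F$ together with a $\Gamma_1$-equivariant map to $X_2$ is the same as a lift $\rho_1$ of $\rho_2$---but you organise it differently from the paper. The paper writes down the map in the direction $\rho_1 \mapsto (\xi,\alpha)$: from $\rho_1$ one builds the torsor $X_1$ with $X_1(\bar F)=\Gamma_1$, observes that $f\colon\Gamma_1\to\Gamma_2$ is $\Gal_F$-equivariant precisely because $f\rho_1=\rho_2$, descends it to a map $\varphi\colon X_1\to X_2$ over $F$, and declares the resulting construction to be visibly a bijection. You instead start from the pair $(\xi,\alpha)$, invoke effectivity of descent for the quotient stack to rewrite $\cB(x,F)$ as the set of descent cocycles on $x$, and then use the triviality of the $\Gal_F$-action on the morphism group $\Gamma_1$ to collapse the cocycle identity to a homomorphism identity. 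Your route makes the role of the rigidification $\alpha$ (killing the coboundary ambiguity) and of the commutativity of $\Gamma_2$ more transparent, at the cost of invoking descent for stacks; the paper's route is shorter and entirely hands-on but leaves the inverse map and the ``clearly a bijection'' implicit.
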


\begin{proof}
A morphism $\rho_1\colon \Gal_F \to \Gamma_1$ determines a $\Gamma_1$ torsor $X_1$ over $F$ with an identification $X_1(\bar F) = \Gamma_1$. If $\rho_1$ satisfies $f\rho_1=\rho_2$ then the map
$X_{1,\bar F} \to X_{2,\bar F}$ given by $f\colon \Gamma_1 \to \Gamma_2$ is invariant under conjugation by $\Gal_F$, and hence descends to a map $\varphi$ defined over $F$. The pair $(X_1,\varphi)$ defines an $F$-point of $[X_2/\Gamma_1]$ and by construction it comes with an isomorphism $ (X_1,\varphi)_{\bar F} \cong (\Gamma_1, f)$. It is clear that this construction defines a bijection.
\end{proof}

\subsection{Proof of the theorem on CM points on Shimura stacks}

\begin{proof}[Proof of Theorem \ref{thm:Sh}]
Choose a normal open subgroup $\cK_0\subset \cK$ such that $\cK_0 \to G(\bA_f)$ is injective with neat image. Consider the groups
\[
	\cU := g\cK g^{-1} \times_{G(\bA_f)} C(\bQ) T(\bA_f)
\]
and
\[
	\cU_0 := g\cK_0 g^{-1} \times_{G(\bA_f)} C(\bQ) T(\bA_f) =  g\cK_0 g^{-1}  \cap C(\bQ) T(\bA_f)
\]
Shrinking $\cK_0$ if necessary, we may assume that $\cU_0$ is contained in $T(\bA_f)$, and that it maps injectively to $T(\bQ)\backslash T(\bA_f)$, 

The zero-dimensional Shimura variety $\Sh_{\cU_0}(T,\{h\})$ is defined over $E$, and carries an action of $\cU/\cU_0$, also defined over $E$. Denote the quotient stack by $\Sh_{\cU}[T,\{h\}]$. We have
\[	
	\Sh_{\cU}^\an[T,\{h\}] = \Big[ \big( T(\bQ) \backslash T(\bA_f) \big) / \, \cU \Big].
\]
(This is not strictly speaking speaking a Shimura stack, since the map $\cU \to T(\bQ)\backslash T(\bA_f)$ need not lift to a map $\cU\to T(\bA_f)$.) The  map
\[
	 \Sh_{\cU_0}(T,\{h\}) \to \Sh_{\cK_0}(G,X), \, [h,t] \mapsto [h,tg]  
\]
is defined over $E$ and is equivariant with respect to the homomorphism
\[
	\cU/\cU_0 \to \cK/\cK_0,\, u \mapsto g^{-1}ug
\]	
of groups acting. We obtain an induced map
\[
	\iota\colon\Sh_\cU[T,\{h\}] \to \Sh_{\cK}[G,X]
\]
of Deligne-Mumford stacks over $E$. We claim that it is a closed immersion. Indeed, it suffices to verify that the map is fully faithful on $\bC$-points. Explicitly, the map $\iota(\bC)$ is given by
\[
	\Big[ T(\bQ) \backslash T(\bA_f) / \cU \Big] \to
	 \Big[ \, G(\bQ) \,\backslash\, X \times G(\bA_f)/\cK \, \Big],\,
	 t \mapsto (h,tg).
\]
If $t_1,t_2 \in T(\bA_f)$ represent objects $x_1,x_2$ in the source, then we have
\[
	\Hom(x_1,x_2) = \{ (\tau,u) \in T(\bQ) \times \cU \mid \tau t_1u=t_2 \}
\]
and, using Lemma \ref{lemma:stab-is-centralizer} we find
\[
	\Hom(\iota x_1,\iota x_2) = \{ (\gamma,k) \in C(\bQ)\times \cK   \mid \gamma t_1gk = t_2 g \}.
\]
The map $\iota$ between these Hom-sets is given by $(\tau,u) \mapsto (\tau, g^{-1} u g)$, and one verifies directly that this is a bijection.

We have $x=\iota y$ with $y=(h,1)$, and since $\iota$ is a closed immersion, it defines a bijection $\cB(y,F) \to \cB(x,F)$. By Theorem \ref{thm:reciprocity} and Lemma \ref{lemma:finetquot}, the map
\[
	\cB(y,F) \to \Hom(\Gal_F,\,\cU/\cU_0)
\]
is injective, with as image precisely those $\bar\rho$ making the diagram
\[
\begin{tikzcd}
\Gal_F \arrow{r} \arrow{d}{\bar\rho} & \Gal_E \arrow{d}{\rec} \\
\cU/\cU_0 \arrow{r} & T(\bQ) \backslash T(\bA_f)/\cU_0
\end{tikzcd}
\]
commute. Since $\cU_0 \to T(\bQ) \backslash T(\bA_f)$ is injective, the square of group homomorphisms
\[
\begin{tikzcd}
 \cU \arrow{r} \arrow{d} &T(\bQ) \backslash T(\bA_f) \arrow{d} \\
 \cU/\cU_0 \arrow{r} & T(\bQ) \backslash T(\bA_f)/\cU_0 
\end{tikzcd}
\]
is cartesian. This shows that there is a bijection between the set of morphisms $\rho\colon \Gal_F\to \cU$ making the square in Theorem \ref{thm:Sh} commute, and the set of morphisms $\bar\rho\colon \Gal_F\to \cU/\cU_0$ as above, which finishes the proof of the theorem.
\end{proof}

\section{Proof of Theorem \ref{thm:AV} (Abelian varieties)}

We use the notation of \S~\ref{subsec:intro-AV}, so $A/\bC$ is an abelian variety with complex multiplication, 
$T$ is the Mumford-Tate group of $A$ and $C$  is the centralizer of $T$ in $\GL(\rH^1(A,\bQ))$.

\begin{proof}[Proof of Theorem \ref{thm:AV}]
If $\Psi\colon \wedge^2 \rH^1(A,\bZ) \to \bZ(1)$ is a polarization, then we write $\GSp_{\Psi}$ for the reductive group
$\GSp(\rH^1(A,\bQ), \Psi)$. Consider the Siegel Shimura datum $(\GSp_\Psi, \cH_g)$ and the compact open subgroup
\[
	\cK := \GSp_\Psi(\bA_f) \cap \GL(\rH^1(A,\hat\bZ)).
\]
The Shimura stack $\Sh_\cK[\GSp_\Psi, \cH_g]$ is the moduli stack of abelian varieties with a polarization `of type $\Psi$'. More specifically, for a field $F$ of characteristic $0$, the groupoid $\Sh_\cK[\GSp_\Psi, \cH_g](F)$ is equivalent to the groupoid of pairs $(\cA,\lambda)$ with
\begin{enumerate}
\item $\cA$ an abelian variety over $F$
\item $\lambda\colon \cA \to \cA^\vee$ a polarization
\end{enumerate}
for which there exists an isomorphism of $\hat\bZ$-modules $\rH^1_\et(\cA_{\bar F},\hat\bZ) \isomto \rH^1(A,\hat\bZ)$ respecting the pairings induced by $\lambda$ and $\Psi$.  The pair $(A,\Psi)$ defines a complex point $x=(h,1)$ on $\Sh_\cK[\GSp_\Psi, \cH_g]$, and if $(\cA,\lambda,\alpha)$ is a model of $(A,\Psi)$ over $F$ corresponding to a point $\xi$ under $x$, then the Galois representation
$\rho_{\cA} \colon \Gal_F \to\GL(\rH^1(A,\hat\bZ))$ (coming from the action on \'etale cohomology) and $\rho_{\xi } \colon \Gal_F \to \cK$ (as produced by Theorem \ref{thm:Sh}) coincide. 

Now if $(\cA,\alpha)$ is a model of $A$ over $F$, then there exists a polarization $\Psi$ on $A$ that descends to  $\cA/F$, and hence $\cA$ defines an $F$-point on $\Sh_\cK[\GSp_\Psi, \cH_g]$. Theorem \ref{thm:Sh} then implies that the Galois representation $\rho$ associated with $(\cA,\alpha)$ satisfies conditions (1) and (2) in Theorem \ref{thm:AV}, and it clearly also satisfies (3).
 
Conversely, if $\rho$ is as in the theorem, then choose a polarization $\Psi \colon \wedge^2 \rH^1(A,\bZ) \to \bZ(-1)$ 
as in (3). Then the pair $(A,\Psi)$ defines a complex point on the Shimura stack $\Sh_\cK[\GSp_\Psi, \cH_g]$, and by Theorem \ref{thm:Sh}, the representation $\rho$ corresponds to an $F$-point `under' $(A,\Psi)$, and hence to a model $\cA$ of $F$.
\end{proof}

\section{Proof of Theorem \ref{thm:K3} (K3 surfaces)}

\subsection{Moduli of polarized K3 surfaces as a Shimura stack}\label{subsec:K3-Shimura}

Let $\Lambda$ be the K3 lattice (the unique even self-dual lattice of signature $(3,19)$, see \cite[Prop.~3.5]{HuybrechtsK3}), let $d$ be a positive integer, and fix a primitive $\lambda \in \Lambda$ with $\lambda^2=2d$ (such $\lambda$ exists and is unique up to isometries of $\Lambda$).  Let $\Lambda_{2d}$ be the orthogonal complement of $\lambda$. We thus have a decomposition $\Lambda \otimes \bQ = (\Lambda_{2d} \otimes \bQ) \oplus \bQ\lambda$.

\begin{definition}A \emph{polarization of degree $2d$} on a K3 surface $X\to S$ is a section $\cL \in \Pic_{X/S}(S)$ such that $\cL_s \in \Pic_{X_s/\kappa(s)}$ is primitive ample and satisfies $\cL_s\cdot \cL_s=2d$ for all $s\in S$.
\end{definition}

If $(X,\cL)$ is a polarized K3 surface of degree $2d$ over $\bC$, then there exists an isometry $\rH^2(X,\bZ(1)) \isomto \Lambda$ mapping $c_1(\cL)$ to $\lambda$.

Let $\cM_{2d}$ denote the stack of polarized K3 surfaces of degree $2d$ over $\bQ$.  By the arguments of \cite{Rizov06}, this is a Deligne-Mumford stack. We will show that it is an open substack of a Shimura stack.

\medskip

Consider the profinite group
\[
	\cK := \big\{ g \in \Orth(\hat\bZ \otimes \Lambda) \mid
	\det g \in \{\pm 1\},\, g\lambda = \lambda \big\}.
\]
The condition  $\det g  \in \{\pm 1\}$ means that we ask that {either} for every place $v$ we have $\det g_v = 1$, {or} that for every prime $v$ we have $\det g_v = -1$. Write $G_{2d}:=\SO(\bQ\otimes \Lambda_{2d})$. This is an algebraic group over $\bQ$.

\begin{lemma}The map
\[
	\cK \to G_{2d}(\bA_f),\,
	g \mapsto (\det g) \cdot g|_{\bA_f \otimes \Lambda_{2d}}
\]
has open image. If $d>1$, then the map is injective. If $d=1$ then it has a kernel of order $2$.
\end{lemma}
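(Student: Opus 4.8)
The plan is to treat the kernel and the openness separately, and to reduce the integrality question in the kernel to a divisibility property of $\lambda$. First I would check that the map indeed lands in $G_{2d}(\bA_f) = \SO(\Lambda_{2d}\otimes\bA_f)$. Any $g \in \cK$ fixes $\lambda$, hence preserves the orthogonal decomposition $\Lambda\otimes\bA_f = (\Lambda_{2d}\otimes\bA_f)\oplus\bA_f\lambda$, so that $\det(g|_{\Lambda_{2d}\otimes\bA_f}) = \det g \in \{\pm 1\}$. Since $\Lambda_{2d}$ has odd rank $21$, the scaled isometry $(\det g)\cdot g|_{\Lambda_{2d}\otimes\bA_f}$ has determinant $(\det g)^{21}\cdot\det g = (\det g)^{22} = 1$, so it lies in $G_{2d}(\bA_f)$.

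Next I would compute the kernel. An element $g$ lies in it exactly when $g|_{\Lambda_{2d}\otimes\bA_f} = (\det g)\,\id$. If $\det g = 1$ this forces $g$ to act trivially on both $\Lambda_{2d}\otimes\bA_f$ and $\lambda$, hence $g = \id$. If $\det g = -1$, then $g$ is forced to be the isometry $r$ acting as $-\id$ on $\lambda^\perp$ and as $+\id$ on $\lambda$ (equivalently $r = -s_\lambda$, with $s_\lambda$ the reflection in $\lambda$). The whole question thus reduces to deciding when $r$ preserves the integral lattice $\hat\bZ\otimes\Lambda$. Writing $r(v) = -v + \tfrac{(v,\lambda)}{d}\lambda$, one sees that $r$ preserves $\bZ_p\otimes\Lambda$ for all $p$ precisely when $d$ divides $(v,\lambda)$ for every $v\in\Lambda$, i.e. when $d$ divides the divisibility of $\lambda$ (the positive generator of $\{(v,\lambda)\mid v\in\Lambda\}\subset\bZ$). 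Here I would invoke the key input: since $\Lambda$ is unimodular and $\lambda$ is primitive, this divisibility equals $1$, as self-duality identifies $\Lambda$ with $\Lambda^\vee$ and a primitive vector then has divisibility one. Hence $r \in \cK$ if and only if $d = 1$, giving a kernel that is trivial for $d > 1$ (injectivity) and of order two, generated by $-s_\lambda$, for $d = 1$.

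For openness I would use only the determinant-one subgroup $\cK^+ := \{g \in \cK \mid \det g_v = 1 \text{ for all } v\}$, on which the map is simply the restriction $g \mapsto g|_{\Lambda_{2d}\otimes\bA_f}$. Let $H \subset \SO(\Lambda\otimes\bQ)$ be the algebraic stabilizer of $\lambda$. Restriction to $\lambda^\perp$ gives an isomorphism of $\bQ$-algebraic groups $H \isomto G_{2d}$, and hence a topological isomorphism $H(\bA_f) \isomto G_{2d}(\bA_f)$. Now $\cK^+ = H(\bA_f)\cap\Orth(\hat\bZ\otimes\Lambda)$ is the intersection of $H(\bA_f)$ with a compact open subgroup of $\Orth(\Lambda\otimes\bA_f)$, so it is compact open in $H(\bA_f)$; under the isomorphism above it maps homeomorphically onto a compact open subgroup of $G_{2d}(\bA_f)$. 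Since $\cK^+ \subseteq \cK$, the image of $\cK$ contains this open subgroup and is therefore itself open.

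I expect the main obstacle to be the kernel computation, specifically the reduction to the divisibility of $\lambda$ and the use of unimodularity to conclude that it equals $1$; the openness statement, by contrast, is essentially formal once one recognizes that restriction to $\lambda^\perp$ identifies $\cK^+$ with a compact open subgroup of $G_{2d}(\bA_f)$.
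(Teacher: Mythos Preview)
Your proof is correct. The openness argument is the same as the paper's (which simply asserts that the image contains the open subgroup $\cK^+=\{g\in\cK\mid\det g=1\}$), though you spell out more carefully why the image of $\cK^+$ is open, via the algebraic isomorphism between the stabilizer of $\lambda$ and $G_{2d}$.

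For the kernel, you and the paper take slightly different routes to the same obstruction. The paper argues via discriminant modules: a nontrivial kernel element acts as $-1$ on $\hat\bZ\otimes\Lambda_{2d}$, hence as $-1$ on the discriminant group $\Lambda_{2d}^\vee/\Lambda_{2d}\cong\bZ/2d\bZ$; but since it fixes $\lambda$ and $\Lambda$ is unimodular, the gluing criterion forces it to act as $+1$ there, so $2d\mid 2$. You instead write the putative kernel element explicitly as $r(v)=-v+\tfrac{(v,\lambda)}{d}\lambda$ and read off that integrality is equivalent to $d$ dividing the divisibility of $\lambda$, which is $1$ by unimodularity and primitivity. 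Your computation is more hands-on and avoids invoking the overlattice/discriminant formalism; the paper's version is shorter once that formalism is taken for granted. Both are fine, and the underlying content (unimodularity forcing a compatibility between the actions on $\bZ\lambda$ and $\Lambda_{2d}$) is identical.
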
 

\begin{proof}
The image is open, since it contains the open subgroup $\{ g \in \cK \mid \det g = 1\}$. Let $g$ be a non-trivial element of the kernel. Then $\det g = -1$, and  $g$ acts as $- 1$ on $\hat\bZ \otimes \Lambda_{2d}$ and hence as $-1$ on the discriminant module of $\Lambda_{2d}$. Since $g$ fixes $\lambda$, and $\Lambda$ is unimodular, it must act as $1$ on the discriminant module. Since the discriminant module is $\bZ/2d\bZ$, this is only possible if $d=1$. 

In the case $d=1$, the automorphism $\sigma$ of $\Lambda$ that acts as $-1$ on $\Lambda_2$ and as $1$ on $\lambda$ (which exists since the actions on the discriminant modules agree) defines the non-trivial element of the kernel.
\end{proof}

Let $\psi\colon \Lambda_{2d} \otimes \bR \to \bR$ be the quadratic form obtained by extending the pairing on $\Lambda_{2d}$ linearly, and let $\Omega^{\pm} \subset \Hom(\bS,G_{2d,\bR})$ be the period domain consisting of those weight $0$ Hodge structures of K3 type for which $\pm \psi$ is a polarization. It has two components, interchanged by the action of the two components of $G_{2d}(\bR) \cong \SO(2,19)$. 
 The pair $(G_{2d},\Omega^{\pm})$ is a Shimura datum, with reflex field $\bQ$.

On $\Sh_{\cK}^\an[G_{2d},\Omega^{\pm}]$ we have a tautological variation of $\bZ$-Hodge structures $\cH$,  defined group-theoretically as in \S~\ref{subsec:VHS} in terms of
\begin{enumerate}
\item the natural left action of $G_{2d}(\bQ)$ on $\Lambda \otimes \bQ$ (fixing $\lambda$),
\item the right action of $\cK$ on  $\Lambda \otimes\bQ$ given by $\det\colon \cK\to \{\pm1\} $
on $\Lambda_{2d} \otimes \bQ$ and trivial on $\bQ\lambda$,
\item the $\hat\bZ$-lattice $\Lambda \otimes \hat \bZ$.
\end{enumerate}

Note that the joint action of $\cK$ on $\Lambda\otimes \bA_f$, on the left via $\cK \to G_{2d}(\bA_f)$ and on the right as in (2) above, is the natural action of $\cK \subset \Orth(\Lambda\otimes \hat \bZ)$ on $\Lambda \otimes \bA_f$, and hence the lattice $\Lambda \otimes \hat \bZ$ is indeed invariant under this joint action.

\begin{proposition}\label{prop:Shimura-module-interpretation}
$\Sh_{\cK}^\an[G_{2d},\Omega^{\pm}]$ is the moduli stack of triples $(H,b,L)$ consisting of
\begin{enumerate}
\item a $\bZ$-VHS $H$ of weight $0$, with Hodge numbers $(1,20,1)$
\item a bilinear form $b\colon H\otimes H \to \bZ$
\item a section $L\in H$ of type $(0,0)$
\end{enumerate}
such that there exists locally an isometry $H\to \Lambda$ mapping $L$ to $\lambda$.
\end{proposition}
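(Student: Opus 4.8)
The plan is to recognize the tautological data on the Shimura stack as a family of triples, thereby producing a morphism to the moduli stack $\cM$ of triples $(H,b,L)$, and then to prove that this morphism is an equivalence by unwinding both sides as groupoids of $S$-points. The intended dictionary is $H\leftrightarrow\cH$, with $b$ the intersection form carried by $\Lambda\otimes\bQ$ and $L\leftrightarrow\lambda$. First I would construct the forward morphism $\Phi$. The form $b$ is preserved by the left $G_{2d}(\bQ)$-action and by the right $\cK$-action of (2) (which scales $\Lambda_{2d}\otimes\bQ$ by $\det\in\{\pm1\}$, hence preserves $b$ since $\det^2=1$), so it descends to $\cH$; similarly $\lambda$ is fixed by both actions and lies in every integral fibre, because $g\in G_{2d}(\bA_f)$ fixes $\lambda$ and so $\lambda\in g(\Lambda\otimes\hat\bZ)\cap(\Lambda\otimes\bQ)$, yielding the section $L$ of type $(0,0)$.

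Next I would check that the fibre over a point $(h,g)$ really is a triple as in the statement. By \S~\ref{subsec:VHS} the integral structure there is the lattice $\Lambda_g:=(\Lambda\otimes\bQ)\cap g(\Lambda\otimes\hat\bZ)$. Since $g$, extended to fix $\lambda$, is an isometry of $(\Lambda\otimes\bA_f,b)$, the lattice $g(\Lambda\otimes\hat\bZ)$ is even unimodular, so $\Lambda_g$ is even unimodular of signature $(3,19)$ and therefore isometric to $\Lambda$, carrying the primitive $\lambda$ of square $2d$ to such a vector. Together with the Hodge structure coming from $h\in\Omega^{\pm}$ (Hodge numbers $(1,20,1)$, $\lambda$ of type $(0,0)$) this is a triple locally isometric to $(\Lambda,\lambda)$, so $\Phi$ is defined.

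Then I would prove $\Phi$ is an equivalence. For essential surjectivity, given a triple I use the hypothesis that there is locally an isometry onto $(\Lambda,\lambda)$ to transport the Hodge structure onto $\Lambda$; since $H$ is of K3 type this lands in $\Omega^{\pm}$, and recording the $\hat\bZ$-structure by an element of $G_{2d}(\bA_f)$ exhibits the triple as $\Phi$ of some $(h,g)$. The real content is full faithfulness, which I would get by tracing a morphism $(h_0,g_0)\to(h_1,g_1)$, i.e. a pair $(\gamma,k)\in G_{2d}(\bQ)\times\cK$ with $\gamma h_0=h_1$ and $\gamma g_0\iota(k)=g_1$, through the total space of \S~\ref{subsec:VHS}. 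The induced map of integral triples $\Lambda_{g_0}\to\Lambda_{g_1}$ turns out to be not $\gamma$ but the \emph{det-twisted} isometry $\phi$ acting as $\det(k)\cdot\gamma$ on $\Lambda_{2d}$ and fixing $\lambda$; one checks that $\phi$ is a Hodge isometry fixing $\lambda$ and that it does send $\Lambda_{g_0}$ onto $\Lambda_{g_1}$, the twist by $\det(k)$ correcting exactly for the fact that $\iota(k)$, unlike the natural action of $k$, need not preserve $\Lambda\otimes\hat\bZ$ when $\det(k)=-1$ and $d>1$. Conversely an integral Hodge isometry $\phi$ fixing $\lambda$ determines $\epsilon:=\det(\phi|_{\Lambda_{2d}})$, hence $\gamma:=\epsilon\cdot\phi|_{\Lambda_{2d}}\in G_{2d}(\bQ)$, and, using that a $\bZ$-lattice is recovered from its $\hat\bZ$-completion to pass from $\phi(\Lambda_{g_0})=\Lambda_{g_1}$ to an equality of $\hat\bZ$-lattices, an element $k\in\cK$ of determinant $\epsilon$ with $\iota(k)=g_0^{-1}\gamma^{-1}g_1$; this recovers $(\gamma,k)$.

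The main obstacle, and the reason for the det-twisted right action in the definition of $\cH$, is precisely this bookkeeping of orientations. As recorded above, when $d=1$ the map $\cK\to G_{2d}(\bA_f)$ has a kernel of order $2$, so $(\gamma,k)$ and $(\gamma,\sigma k)$ are distinct morphisms with the same $\gamma g_0\iota(k)=g_1$; the point is that they induce the different Hodge isometries $\phi$ and $-\phi$ on $\Lambda_{2d}$, so $\Phi$ nonetheless stays faithful. It is exactly the trivialization of the determinant, equivalently the distinction between $\Orth$ and $\SO$, that is invisible to $G_{2d}(\bA_f)$ but visible to the integral triple, and verifying that the two descriptions agree on objects, on all morphisms, and on composition, and then gluing the local period framings over a base $S$ into the $G_{2d}(\bQ)$-torsor underlying an $S$-point of the quotient stack, is where the work concentrates.
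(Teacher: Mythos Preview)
Your argument is correct and covers the same ground as the paper's, but the two proofs organize the inverse construction differently. You build the forward functor $\Phi$ and then verify it is an equivalence by explicitly matching morphisms $(\gamma,k)$ with integral Hodge isometries $\phi$, carefully tracking the $\det$-twist. The paper instead constructs the inverse directly: given a triple $(H,b,L)$, it chooses separately a $\hat\bZ$-framing $\alpha\colon H\otimes\hat\bZ\isomto\Lambda\otimes\hat\bZ$ and a $\bQ$-framing $\beta\colon H\otimes\bQ\isomto\Lambda\otimes\bQ$, both sending $L$ to $\lambda$, subject to the single compatibility that $\det\alpha$ and $\det\beta$ agree on the integral line $\det H$. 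This compatibility is exactly what forces $\beta\alpha^{-1}|_{\Lambda_{2d}\otimes\bA_f}$ to have determinant $1$, so that $g:=\beta\alpha^{-1}$ lands in $G_{2d}(\bA_f)=\SO(\Lambda_{2d}\otimes\bA_f)$ rather than merely in $\Orth$; and the ambiguity in $\alpha$ (resp.\ in $\beta$ once $\alpha$ is fixed) is then visibly a unique element of $\cK$ (resp.\ of $G_{2d}(\bQ)$), which gives well-definedness on the double quotient in one stroke. Your approach makes the groupoid structure and the $d=1$ kernel more explicit, which is illuminating; the paper's approach is more economical and makes transparent why the condition $\det g\in\{\pm1\}$ in the definition of $\cK$ is the right one---it is precisely the condition that $\det\alpha$ come from an integral orientation $\det H\isomto\det\Lambda$. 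Your phrase ``recording the $\hat\bZ$-structure by an element of $G_{2d}(\bA_f)$'' in the essential surjectivity step is the one place that would benefit from the paper's framing device: as written it is not clear why the recorded element has determinant $1$, and the cleanest fix is exactly the $\alpha$/$\beta$ determinant compatibility.
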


\begin{proof}
Since $\Lambda$ is the unique element of its genus, the tautological variation of Hodge structures $\cH$ (with the section induced by $\lambda$) satisfies the desired constraints. Conversely, given a triple $(H,b,L)$ over $S$, there exists, locally on $S$,
\begin{enumerate}
\item an isometry $\alpha\colon H\otimes \hat \bZ \isomto \Lambda \otimes \hat \bZ$ such that $\alpha(L)=\lambda$ and such that $\det\alpha$ restricts to an isomorphism $\delta \colon \det H \to \det \Lambda$ of rank one $\bZ$-modules
\item an isometry $\beta\colon H\otimes \bQ \isomto \Lambda \otimes \bQ$ such that $\beta(L)=\lambda$ and such that $\det\beta = \delta \otimes \bQ$
\end{enumerate}
The isometry $\beta$ defines by transport of structure a Hodge structure on $\Lambda \otimes \bR$ and hence an element $h\in \Omega^\pm$. The restriction of the composition $\beta \alpha^{-1} \colon \Lambda \otimes \hat \bA_f \to \Lambda \otimes \hat \bA_f$  to $\Lambda_{2d} \otimes \bA_f$ has determinant $1$ by construction, and hence defines an element of $g\in G_{2d}(\bA_f)$. Therefore, locally on $S$ the triple $(H,b,L)$ defines a pair $(h,g) \in \Omega^\pm \times G_{2d}(\bA_f)$. 

The choice of $\alpha$ is unique up to a unique element of $\cK$, and fixing $\alpha$, the choice of $\beta$ is unique up to a unique element of $G_{2d}(\bQ)$. It follows that the pairs $(h,g)$ glue to a section of the quotient stack $\Sh_{\cK}^\an[G_{2d},\Omega^\pm]$ which is unique up to unique isomorphism.
\end{proof}

\begin{proposition}\label{prop:K3-Shimura}
The map $\iota\colon \cM_{2d,\bC} \to \Sh_{\cK}^\an[G_{2d},\Omega^\pm]$ defined by mapping a polarized K3 surface $(\pi\colon X\to S, \cL)$ to the variation of Hodge structures $H:=\rR^2 \pi_\ast \bZ(1)$ and the section $L:=c_1(\cL)$ is an open immersion.
\end{proposition}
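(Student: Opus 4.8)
The plan is to show that $\iota$ is étale and a monomorphism; since an étale monomorphism of Deligne--Mumford stacks is an open immersion, this suffices, and the two properties are of a local and a global nature respectively. Both $\cM_{2d,\bC}$ and $\Sh_{\cK}^\an[G_{2d},\Omega^\pm]$ are smooth of dimension $19$. First I would check that $\iota$ is well defined: for a polarized K3 surface $\pi\colon X\to S$ the triple $(\rR^2\pi_\ast\bZ(1),\,b,\,c_1(\cL))$ is a $\bZ$-VHS of weight $0$ with Hodge numbers $(1,20,1)$, with $b$ the cup-product pairing and $c_1(\cL)$ a section of type $(0,0)$; the fibrewise isometry $\rH^2(X_s,\bZ(1))\isomto\Lambda$ carrying $c_1(\cL_s)$ to $\lambda$ recalled above, together with local constancy of $\rR^2\pi_\ast\bZ(1)$, provides the local isometry to $(\Lambda,\lambda)$ required by Proposition \ref{prop:Shimura-module-interpretation}.

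For étaleness I would invoke the local Torelli theorem. The deformation theory of a K3 surface $X$ is unobstructed, since $\rH^2(X,\cT_X)\cong H^{1,2}=0$ and $\rH^0(X,\cT_X)\cong H^{1,0}=0$; the tangent space to the universal deformation is $\rH^1(X,\cT_X)\cong H^{1,1}$, and fixing the polarization cuts this down to the $19$-dimensional kernel of cup-product with $c_1(\cL)$. The differential of the period map identifies this space with the tangent space to $\Omega^\pm$ at the period point (the local Torelli theorem, see \cite{HuybrechtsK3}), so $\iota$ induces an isomorphism on tangent spaces at every point and is therefore étale.

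Since $\iota$ is étale, to prove it is a monomorphism it is enough to verify that it is injective on isomorphism classes of $\bC$-points and induces isomorphisms on automorphism groups. Both reduce to the global Torelli theorem for polarized K3 surfaces \cite{HuybrechtsK3}, via the following observation controlling the ample cone. If $g$ is a Hodge isometry $\rH^2(X,\bZ(1))\to\rH^2(X',\bZ(1))$ carrying $c_1(\cL)$ to the ample class $c_1(\cL')$, then $g$ preserves the ample cones. Indeed, because $\cL$ is ample, every effective $(-2)$-class $\delta$ on $X$ satisfies $\delta\cdot c_1(\cL)>0$ (in particular no $(-2)$-class is orthogonal to $c_1(\cL)$); as $g$ is an isometry fixing the polarization, $g(\delta)\cdot c_1(\cL')=\delta\cdot c_1(\cL)>0$, so $g(\delta)$ is again an effective $(-2)$-class. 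Thus $g$ permutes the walls of the chamber decomposition of the positive cone and fixes $c_1(\cL')$ in the interior of the chamber $K_{X'}$, whence $g(K_X)=K_{X'}$. Global Torelli then produces a unique isomorphism of K3 surfaces inducing $g$ and matching the polarizations, giving injectivity on points, and the same argument with $X=X'$ identifies $\Aut(X,\cL)$ with the corresponding group of Hodge isometries.

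The step demanding the most care---and the reason the group $\cK$ retains determinant (orientation) data and is allowed to map non-injectively to $G_{2d}(\bA_f)$---is matching the moduli-theoretic morphisms exactly with the isometries in $\cK$, not merely up to sign or up to the Weyl group of $(-2)$-reflections. An isomorphism of K3 surfaces is holomorphic, hence orientation-preserving, which pins down the action of $g$ on $\det\rH^2$ and places it in $\cK$ with the prescribed determinant condition. In the borderline case $d=1$ the order-$2$ kernel of $\cK\to G_{2d}(\bA_f)$ must be accounted for geometrically: its nontrivial element acts as $+1$ on $\lambda$ and $-1$ on $\lambda^\perp$, which is precisely the action of the covering involution of the degree-$2$ double cover of $\bP^2$, so full faithfulness persists. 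With étaleness and the monomorphism property established, $\iota$ is an open immersion.
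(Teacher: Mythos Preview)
Your proof follows the same route as the paper's: the paper's proof is the single sentence ``In the light of Proposition~\ref{prop:Shimura-module-interpretation}, this is a restatement of the Torelli theorem for polarized K3 surfaces,'' with a reference to \cite{HuybrechtsK3}. Your argument is a correct and careful unpacking of that sentence into local Torelli (for \'etaleness) and global Torelli (for the monomorphism property). The key computation---that a Hodge isometry carrying one ample class to another automatically preserves the ample cones, because it sends effective $(-2)$-classes to effective $(-2)$-classes---is exactly what is needed to invoke the strong form of global Torelli and obtain full faithfulness.

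One remark on your final paragraph: the assertion that ``an isomorphism of K3 surfaces is holomorphic, hence orientation-preserving, which pins down the action of $g$ on $\det\rH^2$'' is not correct as stated. Orientation-preserving on the underlying $4$-manifold controls the action on $\rH^4$, not on $\det\rH^2$; indeed the covering involution you mention in the very next sentence acts with determinant $-1$ on $\rH^2$. The reason an automorphism of a K3 surface lands in $\cK$ (i.e.\ has $\det g\in\{\pm1\}$ globally rather than locally at each prime) is simply that it acts on the \emph{integral} lattice $\rH^2(X,\bZ)$, so its determinant is a single sign. In any case, once you argue via the modular interpretation of Proposition~\ref{prop:Shimura-module-interpretation}, these determinant considerations are already absorbed and the paragraph is unnecessary for the proof; the full faithfulness argument in your third paragraph is complete on its own.
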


\begin{proof} In the light of Proposition \ref{prop:Shimura-module-interpretation}, this is a restatement of the Torelli theorem for polarized K3 surfaces, see \cite[\S~6.4 \& Thm~7.5.3]{HuybrechtsK3}.
\end{proof}

\begin{remark}Note that it is not the primitive cohomology
\[
	P := c_1(\cL)^\perp \subset \rH^2(X,\bQ(1))
\]
of a polarized K3 surface $(X,\cL)$, but rather its determinant twist $P \otimes \det P$ which corresponds to the standard tautological variation of Hodge structures on the orthogonal Shimura stack $\Sh_\cK[G_{2d},\Omega^\pm]$. 
\end{remark}

\begin{theorem}\label{thm:K3-Shimura}
The map $\iota$ descends to an open immersion
\[
\iota\colon \cM_{2d} \to \Sh_{\cK}[G_{2d},\Omega^\pm]
\]
defined over $\bQ$.
\end{theorem}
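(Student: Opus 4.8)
The plan is to deduce the descent from the statement over $\bC$ (Proposition~\ref{prop:K3-Shimura}) together with the fact that the canonical model is characterized by its special points. The first, preliminary, step is to upgrade the analytic open immersion $\iota$ to an \emph{algebraic} open immersion of Deligne--Mumford stacks over $\bC$: its image is the complement of the Noether--Lefschetz loci, and the analytic isomorphism onto this open substack is algebraic by GAGA for the stacks involved (equivalently, by Borel's algebraicity theorem for the period map). Granting this, the entire content of the theorem is that the $\bC$-morphism $\iota_\bC$ is defined over $\bQ$; once we know $\iota_\bC=(\iota_0)_\bC$ for a morphism $\iota_0$ of $\bQ$-stacks, the property of being an open immersion descends automatically, as it may be checked after the faithfully flat base change $\Spec\bC\to\Spec\bQ$.

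So the problem reduces to descending the morphism. Using the $\bQ$-structures on both sides---on the source by \cite{Rizov06}, on the target the canonical model of \S\ref{sec:stack}---every $\sigma\in\Aut(\bC/\bQ)$ acts compatibly on both stacks, and $\iota_\bC$ is defined over $\bQ$ if and only if $\sigma^\ast\iota_\bC=\iota_\bC$ for all such $\sigma$ (the fixed field of $\Aut(\bC/\bQ)$ being $\bQ$, this is the standard descent criterion for morphisms of finite-type $\bQ$-stacks). Both $\iota_\bC$ and $\sigma^\ast\iota_\bC$ are morphisms into the separated stack $\Sh_\cK[G_{2d},\Omega^\pm]_\bC$, so the locus on which they agree is a closed substack of $\cM_{2d,\bC}$. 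It therefore suffices to check agreement on a Zariski-dense set of points. I would take the CM points: complex K3 surfaces with complex multiplication map exactly to the special points of the Shimura stack, and special points are dense in $\Sh_\cK[G_{2d},\Omega^\pm]_\bC$, hence in its open substack $\iota(\cM_{2d,\bC})$.

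It remains to verify the equality at a CM point $(X,\cL)$, i.e.\ that the period point attached to the conjugate polarized surface $(X,\cL)^\sigma$ coincides with the $\sigma$-translate of the period point of $(X,\cL)$, the latter being computed by the reciprocity law (Theorem~\ref{thm:reciprocity}) that defines the canonical model. This is precisely the main theorem of complex multiplication for CM K3 surfaces, and it is the technical heart of the argument. I would prove it by reduction to the abelian-variety case via the Kuga--Satake construction: the inclusion of $(G_{2d},\Omega^\pm)$ into the associated $\mathrm{GSpin}$, and hence Siegel, Shimura datum is a morphism of Shimura data with reflex field $\bQ$; the Kuga--Satake abelian variety of a CM K3 surface is again of CM type; and the classical main theorem of CM for abelian varieties transports the required reciprocity back to the K3 surface. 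Here the determinant twist emphasized in the Remark following Proposition~\ref{prop:Shimura-module-interpretation} is essential, since it is exactly the twist making the tautological variation on the orthogonal Shimura stack match the Kuga--Satake input and keeping the comparison integral and Galois-equivariant (cf.~\cite{Andre96a,MadapusiPera15}). The main obstacle is this final compatibility---reconciling the arithmetic normalization of the reciprocity map on the orthogonal side with the Kuga--Satake reciprocity on the abelian side---rather than the formal descent, which is routine once equivariance on CM points is established.
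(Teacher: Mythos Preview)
Your approach is essentially the second of the two routes the paper sketches: the paper's primary citation is to Deligne's theorem on absolute Hodge cycles (via \cite{Andre96a,Andre96b} and the arguments of \cite[Cor.~5.4]{MadapusiPera15}), while the CM/Kuga--Satake argument you outline is the ``alternatively'' clause, attributed to \cite{Rizov10}. So the overall strategy is sound and known.

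One technical point deserves care. Your density argument asserts that for two morphisms into the separated Deligne--Mumford stack $\Sh_\cK[G_{2d},\Omega^\pm]_\bC$, the locus where they agree is a \emph{closed substack} of the source. For stacks this is not literally true: the equalizer of $\iota_\bC$ and $\sigma^\ast\iota_\bC$ is the pullback of the diagonal, and for a separated DM stack the diagonal is finite and unramified but generally not a closed immersion. Concretely, what you get is a finite unramified cover of $\cM_{2d,\bC}$ parametrizing $2$-isomorphisms between the two maps, and you must show a section exists, not merely that the image is dense. The clean fix---and this is exactly what the paper's parenthetical ``show this for K3 surfaces with level structure, and deduce the result by passing to the quotient'' is pointing at---is to first add enough level structure that both source and target become schemes (e.g.\ take $n\geq 3$ in the notation of \S\ref{subsec:level-structures}), run your density-of-CM-points argument there where it is valid verbatim, and then descend along the finite quotient by $\cK/\cK_{2d,n}$. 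The paper also flags (Remark~\ref{rmk:rizov}) that Rizov's original treatment of level structures needs the orientation correction of Definition~\ref{def:oriented-level-structure}, so if you cite \cite{Rizov10} for the CM compatibility you should route through the oriented level structures of \S\ref{subsec:level-structures}.

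The absolute-Hodge route the paper prefers avoids the CM machinery entirely: once one knows the relevant Hodge classes on K3 surfaces are absolutely Hodge (which follows from Deligne's theorem for abelian varieties via Kuga--Satake, as in \cite{Andre96a}), the Galois-equivariance of $\iota$ on \emph{all} points is immediate, and no density argument is needed. Your route trades that input for the main theorem of CM, which is arguably of comparable depth; both ultimately rest on Kuga--Satake plus a deep theorem about abelian varieties.
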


\begin{proof}
This follows from Deligne's theorem on absolute Hodge cycles \cite{Deligne82,Andre96a,Andre96b}, see the arguments of \cite[Cor.~5.4]{MadapusiPera15}. Alternatively, one can use the CM arguments of \cite{Rizov10} to show this for K3 surfaces with level structure, and deduce the result by passing to the quotient. Note however, that some care is needed in dealing with level structures, see Remark \ref{rmk:rizov} below.
\end{proof}

\begin{remark}
To the best of our knowledge, the basic result of Proposition \ref{prop:K3-Shimura}, describing  the stack of polarized K3 surfaces as an open substack of a Shimura stack seems to be missing from the literature, even for $d>1$ (so that $\cK$ is a subgroup of $G_{2d}(\bA_f)$). Madapusi Pera \cite[Proof of~5.3]{MadapusiPera15} gives a closely related statement: an open immersion between \'etale $2:1$ covers of $\cM_{2d}$ and $\Sh_{\cK}[G_{2d},\Omega^\pm]$; see also Remark \ref{rmk:MP2} below.
\end{remark}

\begin{remark}
If $d=1$ then the global stabilizer of $\Sh_{\cK}[G_2,\Omega^\pm]$ coming from the non-trivial element of the kernel of $\cK \to G_{2}(\bA_f)$ has a geometric interpretation: if $(X,\cL)$ is a K3 surface of degree $2$ then $|\cL|$ defines a $2:1$ map to a projective plane which induces a canonical involution on $(X,\cL)$. 
\end{remark}

Theorem \ref{thm:K3-Shimura} has the following corollary (see also \cite[Lem.~8.4.1]{Andre96a}):

\begin{corollary}\label{cor:orientability}
Let $\pi\colon X\to S$ a polarized family of K3 surfaces over a scheme $S$ over $\bQ$, and let $\bar s$ a geometric point of $S$. Then 
\begin{enumerate}
\item the action of $\pi_1(S,\bar s)$ on  $\det \rH^2_\et(X_{\bar s},\hat\bZ)$ has image in $\{\pm 1 \} \subset \hat\bZ^\times$
\item if $S$ is connected and the action of $\pi_1(S,\bar s)$ on $\det \rH^2_\et(X_{\bar s},\hat\bZ)$ is trivial, then there exists an isomorphism
\[
	\nu \colon \det \rR^2 \pi_\ast \hat\bZ(1) \longisomto \hat\bZ
\]
such that for every $s \in S(\bC)$ the map $\nu$ restricts to an isomorphism
\[
	\nu_s^\an \colon \det\rH^2(X_s,\bZ) \longisomto \bZ
\]
of free $\bZ$-modules of rank $1$. \qed
\end{enumerate}
\end{corollary}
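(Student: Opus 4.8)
The plan is to deduce everything from the open immersion $\iota\colon\cM_{2d}\to\Sh_{\cK}[G_{2d},\Omega^\pm]$ over $\bQ$ of Theorem~\ref{thm:K3-Shimura}, exploiting the defining constraint $\det g\in\{\pm1\}$ on elements $g\in\cK$. First I would attach to the polarized family $(\pi\colon X\to S,\cL)$ its classifying morphism $S\to\cM_{2d}$ and compose with $\iota$ to obtain $f\colon S\to\Sh_{\cK}[G_{2d},\Omega^\pm]$. By Proposition~\ref{prop:K3-Shimura} the variation $\rR^2\pi_\ast\bZ(1)$ is the pullback of the tautological variation $\cH$, so that the \'etale local system $\rR^2\pi_\ast\hat\bZ(1)$ is identified with $f^\ast\cH_{\hat\bZ}$. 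Since $\cH_{\hat\bZ}$ is built in \S\ref{subsec:VHS} from the natural action of $\cK\subset\Orth(\Lambda\otimes\hat\bZ)$, its monodromy --- and hence the action of $\pi_1(S,\bar s)$ on $\rH^2_\et(X_{\bar s},\hat\bZ(1))$ --- factors through $\cK$. Assertion~(1) then drops out: taking determinants and using $\det g\in\{\pm1\}$ for $g\in\cK$ shows the action on the line $\det\rH^2_\et(X_{\bar s},\hat\bZ(1))$ has image in $\{\pm1\}$. (At bottom this is just orthogonality: the Tate-twisted cup product is a perfect symmetric $\hat\bZ$-bilinear form, so the monodromy is orthogonal and its determinant is $\pm1$.)

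For~(2) I would assume $S$ connected with the character of~(1) trivial. Then $D:=\det\rR^2\pi_\ast\hat\bZ(1)$ carries trivial $\pi_1(S,\bar s)$-monodromy, so it is isomorphic to the constant sheaf $\hat\bZ$; I would fix a trivialization $\nu_0\colon D\isomto\hat\bZ$, which is unique up to a global unit $c\in\hat\bZ^\times$ because $S$ is connected. The task is then to choose $c$ so that, for every $s\in S(\bC)$, the induced stalk map carries the Betti line $\det\rH^2(X_s,\bZ)\subset D_s$ onto $\bZ\subset\hat\bZ$ --- which is exactly the requirement that $\nu_s^\an$ be an isomorphism of rank-one free $\bZ$-modules.

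I expect this last normalization to be the only real obstacle, because $S(\bC)$ may be disconnected even when $S$ is connected over $\bQ$, and a priori $\nu_0$ could scale the integral generator differently on different (Galois-permuted) components. The device that resolves this is the canonical unimodular pairing. Cup product makes $\rR^2\pi_\ast\hat\bZ(1)$ self-dual (the Tate twist identifies $\rR^4\pi_\ast\hat\bZ(2)$ with the constant sheaf $\hat\bZ$), and, $\Lambda$ being unimodular, this induces a flat $\hat\bZ$-valued quadratic form $q$ on the line $D$ whose value on any integral Betti generator $e_s$ of $\det\rH^2(X_s,\bZ)$ is the fixed discriminant $\Disc\Lambda\in\{\pm1\}$, at every $s$. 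Transporting $q$ across the flat trivialization $\nu_0$ would give a single global form $u\,x^2$ with $u\in\hat\bZ^\times$; evaluating at a complex point yields $\Disc\Lambda=u\,\nu_0(e_s)^2$, so that $\nu_0(e_s)^2=\Disc\Lambda/u$ is independent of $s$ and hence $\nu_0(e_s)$ is independent of $s$ up to sign. Rescaling $\nu_0$ by this common value produces the desired $\nu$ with $\nu_s(e_s)\in\{\pm1\}$ simultaneously for all $s\in S(\bC)$, giving~(2). The remaining inputs --- the identification $f^\ast\cH_{\hat\bZ}=\rR^2\pi_\ast\hat\bZ(1)$ together with its pairing, and the Betti--\'etale compatibility of $q$ and of the comparison isomorphism fiberwise --- are formal consequences of \S\ref{subsec:VHS} and of the \'etale--Betti comparison, and I would treat them as routine.
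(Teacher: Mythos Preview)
Your argument for~(1) is correct and is essentially the deduction implicit in the paper's \qed: the classifying map $f$ factors through $\Sh_\cK[G_{2d},\Omega^\pm]$, so the monodromy of $\rR^2\pi_\ast\hat\bZ(1)=f^\ast\cH_{\hat\bZ}$ lies in $\cK$, and $\det\cK\subset\{\pm1\}$ by definition.

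Your argument for~(2), however, has a genuine gap. From $\nu_0(e_s)^2=\Disc\Lambda/u$ for all $s$ you conclude that $\nu_0(e_s)$ is determined up to a global sign; but in $\hat\bZ^\times$ the equation $a^2=b^2$ only gives $a/b\in\mu_2(\hat\bZ)=\prod_p\{\pm1\}$, which is far larger than $\{\pm1\}$. The quadratic form on the determinant line therefore pins down the Betti generator only modulo $\mu_2(\hat\bZ)$, and nothing in your argument rules out two Galois-conjugate complex points $s_1,s_2$ of $S$ with $\nu_0(e_{s_1})/\nu_0(e_{s_2})$ equal, say, to the element of $\mu_2(\hat\bZ)$ that is $-1$ at the prime $3$ and $+1$ elsewhere. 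This is exactly the phenomenon flagged in the Remark following the corollary, and is the reason the definition of $\cK$ demands that $\det g$ be a \emph{global} sign rather than merely $\pm1$ at each place.

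The intended deduction uses more of Theorem~\ref{thm:K3-Shimura} than the monodromy bound alone. On $\Sh_\cK^{\an}[G_{2d},\Omega^\pm]$ the tautological variation carries both a $\bZ$-lattice $\cH_\bZ$ and a $\hat\bZ$-lattice $\cH_{\hat\bZ}$. Since every $g\in G_{2d}(\bA_f)$ acts with determinant~$1$ on $\Lambda\otimes\bA_f$, the fibre of $\det\cH_\bZ\subset\det\cH_{\hat\bZ}$ at any $(h,g)$ is the fixed inclusion $\det\Lambda\subset\det\Lambda\otimes\hat\bZ$; globally, both are this constant pair twisted by the single character $\det\colon\cK\to\{\pm1\}$. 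Pulling this \emph{pair} back along $f$ delivers, in one stroke, both the trivialization $\nu$ of the \'etale determinant and its compatibility with the Betti integral line at every complex point. The quadratic form by itself cannot do this, because it cannot distinguish $\{\pm1\}$ from $\mu_2(\hat\bZ)$.
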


\begin{remark}
The fact that the action of $\pi_1(S,s)$ on $\rH^2_\et(X_{\bar s},\bQ_\ell(1))$ respects the intersection pairing implies that the induced action on $\det \rH^2_\et(X_{\bar s},\bQ_\ell(1))$ takes values in $\{\pm 1\} \subset \bQ_\ell^\times$. Thus, we obtain for every $\ell$ a quadratic character
\[
	\chi_\ell\colon \pi_1(S,s) \to \{ \pm 1\}.
\]
The first assertion in the statement of the corollary is that $\chi_\ell$ is independent of $\ell$. This is in fact true for the determinant of the middle-cohomology of any even-dimensional proper smooth $X\to S$, see \cite[Lemma 3.2]{Saito12}.
\end{remark}

If $S$ is connected and the residue fields of $S$ can be embedded in $\bC$, then an isomorphism $\nu$ satisfying the condition on $\bC$-points in part (2) of Corollary \ref{cor:orientability} is unique up to sign. We will call such an isomorphism $\nu$ an \emph{orientation} of $X\to S$.

For $\pi\colon X\to S$ a family over an arbitrary $S$ over $\bQ$, we say that an isomorphism
\[
	\nu \colon \det \rR^2 \pi_\ast \hat\bZ(1) \longisomto \hat\bZ
\]
is an \emph{orientation} if for every $s\in S$ there is a finitely generated subfield $K \subset \kappa(s)$ such that $(X_s,\nu_s)$ descends to a pair $(X_K,\nu_K)$ with $\nu_K$ an orientation. Since every family $X\to S$ of polarized K3 surfaces over $\bQ$ can be defined over a scheme of finite type over $\bQ$, we see that the orientations on $X\to S$ form an \'etale $\{\pm1\}$-torsor.

Orientations will play an important role in the discussion of level structures below.  

\subsection{Level structures}\label{subsec:level-structures}

\begin{definition}\label{def:oriented-level-structure}
An  \emph{oriented full level $n$-structure} on a degree $2d$ polarized  K3 surface $(X,\cL)$ over a scheme $S$ over $\bQ$ is a pair $(\nu,\alpha)$ consisting of an orientation $\nu$ and
an isomorphism of \'etale sheaves
\[
	\alpha \colon \rR^2 \pi_\ast \bZ/n\bZ(1) \longisomto \Lambda \otimes \bZ/n\bZ
\]
such that
\begin{enumerate}
\item $\alpha(c_1(\cL)) = \lambda$,
\item $\alpha$ respects the pairings,
\item $\nu \otimes \bZ/n\bZ = \det \alpha$
\end{enumerate}
A \emph{full level $n$-structure} is an isomorphism of \'etale sheaves
\[
	\alpha \colon \rR^2 \pi_\ast \bZ/n\bZ(1) \longisomto \Lambda \otimes \bZ/n\bZ
\]
that \'etale locally on $S$ extends to an oriented full level $n$-structure $(\nu,\alpha)$.
\end{definition}

Note that if $n>2$ then $\nu$ is uniquely determined by $\alpha$ and hence a full level $n$-structure is the same as an oriented full level $n$-structure.

\begin{remark}\label{rmk:rizov}
Rizov \cite{Rizov06,Rizov10} defines a full level $n$-structure without the condition on orientations. However, this leads to problems and incorrect statements, that can essentially all be traced back to the fact that $\SO(\bA_f)$ is not of index $2$ in $\Orth(\bA_f)$ (the quotient is $\prod_v \{ \pm 1\}$). For example: with the definition of \cite[Def.~5.1.1]{Rizov06}, the collection of possible level $n$-structures on a fixed K3 surface over $\bC$ is infinite. In particular, \cite[Prop~2.4.6]{Rizov10} is incorrect as printed. (In the proof in \emph{loc.~cit.~}one cannot conclude that the element $q$ has determinant $1$.)
\end{remark}

Let $\cM_{2d,n}$ be the stack over $\bQ$ classifying polarized K3 surfaces of degree $2d$ and a full level $n$-structure, and let $\widetilde\cM_{2d,n}$ be the stack over $\bQ$ classifying polarized K3 surfaces with an oriented full level $n$-structure. 

Consider the profinite groups
\[
	\cK_{2d,n} := \{ g \in \Orth(\hat\bZ \otimes \Lambda) \mid
	\det g \in \{\pm 1\},\, g\lambda = \lambda,\, g \equiv 1 \bmod{n} \}
\]
and
\[
	\widetilde\cK_{2d,n} := \{ g \in \SO(\hat\bZ \otimes \Lambda) \mid
	 g\lambda = \lambda,\, g \equiv 1 \bmod{n} \}.
\]
Note that $\widetilde\cK_{2d,n} = \cK_{2d,n}$ for $n>2$.

The same arguments as in \S~\ref{subsec:K3-Shimura} yield the following generalization of Theorem \ref{thm:K3-Shimura}.

\begin{theorem}\label{thm:K3-level-Shimura}
For all $d\geq 1$ and $n\geq 1$ there are open immersions
\[
	\iota\colon \cM_{2d,n} \to \Sh_{\cK_{2d,n}}[G_{2d},\Omega^\pm]
\]
and
\[
	\iota\colon \widetilde\cM_{2d,n} \to \Sh_{\tilde\cK_{2d,n}}[G_{2d},\Omega^\pm]
\]
defined over $\bQ$, compatible with the natural forgetful maps and projections. \qed
\end{theorem}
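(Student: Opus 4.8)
The plan is to reproduce, with level-$n$ rigidification inserted throughout, the three-step structure of \S~\ref{subsec:K3-Shimura}: first identify the finer analytic Shimura stacks as moduli of triples equipped with level and orientation data, then deduce the open immersion over $\bC$ from the Torelli theorem, and finally descend to $\bQ$ via absolute Hodge cycles. For the first step I would prove analogues of Proposition \ref{prop:Shimura-module-interpretation} for $\cK_{2d,n}$ and $\widetilde\cK_{2d,n}$. Recall that in that proposition a triple $(H,b,L)$ yields, locally on $S$, an isometry $\alpha\colon H\otimes\hat\bZ\isomto\Lambda\otimes\hat\bZ$ carrying $L$ to $\lambda$, well-defined up to the right action of $\cK$, and the passage to the quotient by $\cK$ forgets $\alpha$ entirely. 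Shrinking $\cK$ to the principal congruence subgroup $\cK_{2d,n}$ cut out by $g\equiv 1\bmod n$ (together with the global sign condition $\det g\in\{\pm1\}$) therefore retains precisely the reduction $\alpha\bmod n$, which is exactly a full level $n$-structure in the sense of Definition \ref{def:oriented-level-structure}; and shrinking further to $\widetilde\cK_{2d,n}\subset\SO(\hat\bZ\otimes\Lambda)$ imposes $\det=1$ on the residual transition isometries, which by the normalization $\nu\otimes\bZ/n\bZ=\det\alpha$ is the datum of an orientation $\nu$. Thus $\Sh_{\cK_{2d,n}}^\an[G_{2d},\Omega^\pm]$ (resp.~$\Sh_{\widetilde\cK_{2d,n}}^\an[G_{2d},\Omega^\pm]$) classifies triples $(H,b,L)$ equipped with a full level $n$-structure (resp.~an oriented full level $n$-structure).

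Granting these moduli descriptions, the open immersions over $\bC$ follow formally, just as Proposition \ref{prop:K3-Shimura} followed from Proposition \ref{prop:Shimura-module-interpretation}. Indeed, the square whose vertical arrows are the forgetful maps $\cM_{2d,n,\bC}\to\cM_{2d,\bC}$ and $\Sh_{\cK_{2d,n}}^\an[G_{2d},\Omega^\pm]\to\Sh_{\cK}^\an[G_{2d},\Omega^\pm]$ and whose horizontal arrows are the period maps $\iota$ is Cartesian, because by Step 1 a full level $n$-structure on $(X,\cL)$ is the same datum as a level $n$-structure on the associated triple $(\rR^2\pi_\ast\bZ(1),b,c_1(\cL))$; since the bottom arrow is an open immersion (Proposition \ref{prop:K3-Shimura}) and open immersions are stable under base change, so is the top one. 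The same argument with $\widetilde\cK_{2d,n}$ in place of $\cK_{2d,n}$ handles the oriented case, and compatibility with the forgetful maps and projections is built into the construction, coming from the chain of inclusions $\widetilde\cK_{2d,n}\subset\cK_{2d,n}\subset\cK$.

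For the descent to $\bQ$ I would argue exactly as in the proof of Theorem \ref{thm:K3-Shimura}. The level structure $\alpha$ is defined purely in terms of the \'etale sheaf $\rR^2\pi_\ast\bZ/n\bZ(1)$ and the orientation $\nu$ in terms of $\det\rR^2\pi_\ast\hat\bZ(1)$; both are algebraic and, by Corollary \ref{cor:orientability}, behave well under the Galois action. Deligne's theorem on absolute Hodge cycles then shows, as in \cite[Cor.~5.4]{MadapusiPera15}, that the analytic open immersions are the analytifications of open immersions of Deligne-Mumford stacks over $\bQ$, necessarily compatible with the forgetful maps.

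The step I expect to be the main obstacle is the careful bookkeeping of orientations against the sign condition $\det g\in\{\pm1\}$ in Step 1, which is precisely the subtlety flagged in Remark \ref{rmk:rizov}. Because $\SO(\bA_f)$ has infinite index in $\Orth(\bA_f)$, one must verify that the ``either every $\det g_v=1$ or every $\det g_v=-1$'' condition defining $\cK_{2d,n}$ matches the \'etale-local extendability of a full level structure to an oriented one, and that the orientation $\nu$ so produced is globally---rather than merely place-by-place---well-defined. Pinning down that $\cK_{2d,n}$ and $\widetilde\cK_{2d,n}$ are the correct level groups, so that the moduli interpretations of Step 1 hold on the nose, is the crux, and is exactly where the definition adopted here diverges from the erroneous one in the cited literature.
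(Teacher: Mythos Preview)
Your proposal is correct and follows exactly the approach the paper indicates: the paper's own proof consists only of the sentence ``The same arguments as in \S~\ref{subsec:K3-Shimura} yield the following generalization of Theorem \ref{thm:K3-Shimura}'' together with a \qed, and your three steps (moduli interpretation of the finer Shimura stacks, open immersion over $\bC$ via the Cartesian square over Proposition \ref{prop:K3-Shimura}, descent to $\bQ$ via absolute Hodge cycles) are precisely the arguments of \S~\ref{subsec:K3-Shimura} with level data inserted. Your final paragraph correctly isolates the one genuine point of care, namely matching the global sign condition $\det g\in\{\pm1\}$ with the orientation datum, which is the content of Remark \ref{rmk:rizov} and the reason Definition \ref{def:oriented-level-structure} is formulated as it is.
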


\begin{remark}\label{rmk:MP2}
The statement about $\widetilde\cM_{2d,1}$ is already contained in the proof of Proposition~5.3 and in Corollary 5.4 in \cite{MadapusiPera15} (where our stack $\widetilde\cM_{2d,1}$ is denoted $\tilde{\mathrm{M}}^\circ_{2d}$). Note that on $\tilde\cM$ the local systems defined by the primitive cohomology and its determinant twist are isomorphic.

Rizov \cite[Prop.~2.4.6]{Rizov10} gives an open embedding of the moduli space of K3 surfaces with `full level $n$-structures' into a Shimura variety, but as indicated in Remark \ref{rmk:rizov}, the statement needs to be corrected by taking into account orientations, as in Definition \ref{def:oriented-level-structure} and Theorem \ref{thm:K3-level-Shimura}.
\end{remark}

\subsection{Models of K3 surfaces with complex multiplication}

We now come to the proof of Theorem \ref{thm:K3}. We use the notation of \S~\ref{subsec:intro-K3}, so $X/\bC$ is a K3 surface with complex multiplication, $T$ is the Mumford-Tate group of $\rH^2(X,\bQ(1))$. We denote by $C$ the centralizer of $T$ in $\Orth(\rH^2(X,\bQ(1)))$. 
\begin{lemma}Let $\pi$ be a subgroup of $\Orth(\rH^2(X,\hat\bZ(1)))$. Then the following are equivalent:
\begin{enumerate}
\item $\pi$ preserves $\Pic X \subset \rH^2(X,\hat\bZ(1))$ and acts on $V_{X,\bA_f}$ via $T(\bA_f)$,
\item $\pi \subset C(\bQ)T(\bA_f)$.
\end{enumerate}
\end{lemma}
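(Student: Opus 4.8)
The plan is to make condition (2) completely explicit by first computing the centralizer $C$ with respect to the orthogonal decomposition
\[
	\rH^2(X,\bQ(1)) = (\Pic X \otimes \bQ) \oplus V_X .
\]
Since every class in $\Pic X\otimes\bQ$ is of Hodge type $(0,0)$, the torus $T$ acts trivially on $\Pic X\otimes\bQ$, and since $E$ is a CM field the norm-$1$ torus has no nonzero fixed vector in $V_X$; hence $\Pic X\otimes\bQ$ is exactly the subspace fixed by $T$, and $V_X$ is its orthogonal complement. Any $c\in \Orth(\rH^2(X,\bQ(1)))$ commuting with $T$ therefore preserves this decomposition, acts by an arbitrary element of $\Orth(\Pic X\otimes\bQ)$ on the first factor, and commutes with $T$ on the second. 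Using $\End(V_X)=E$ (so that the centralizer of $T$ in $\End_\bQ(V_X)$ is $E$) together with the CM-compatibility $\langle zx,y\rangle=\langle x,\bar z y\rangle$ of the intersection form, the orthogonal elements of $E^\times$ are exactly the norm-$1$ elements, i.e.\ $T$. I thus obtain
\[
	C = \Orth(\Pic X\otimes\bQ)\times T, \qquad C(\bQ)T(\bA_f)=\Orth(\Pic X\otimes\bQ)(\bQ)\times T(\bA_f),
\]
the second identity because $T(\bQ)T(\bA_f)=T(\bA_f)$. Concretely, an element of $C(\bQ)T(\bA_f)$ is block diagonal for the decomposition, acts by a single $\bQ$-rational orthogonal transformation on $\Pic X\otimes\bA_f$, and by an element of $T(\bA_f)$ on $V_{X,\bA_f}$.

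With this description the equivalence becomes a matching of two descriptions of the same block-diagonal elements, the only real point being the passage between the integral condition in (1) and the rationality hidden in (2). For (2)$\Rightarrow$(1) I would take $g\in\pi$ and write $g=(r,u)$ with $r\in\Orth(\Pic X\otimes\bQ)(\bQ)$ and $u\in T(\bA_f)$, so that $g$ automatically acts on $V_{X,\bA_f}$ through $T(\bA_f)$. Since $g$ lies in $\Orth(\rH^2(X,\hat\bZ(1)))$ and preserves $\Pic X\otimes\bA_f$, it preserves the intersection $\rH^2(X,\hat\bZ(1))\cap(\Pic X\otimes\bA_f)=\Pic X\otimes\hat\bZ$, using that $\Pic X$ is primitive in $\rH^2(X,\bZ(1))$; as $r$ is moreover $\bQ$-rational, $r(\Pic X)\subseteq(\Pic X\otimes\hat\bZ)\cap(\Pic X\otimes\bQ)=\Pic X$, so $\pi$ preserves the lattice $\Pic X$.

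For (1)$\Rightarrow$(2) the decisive observation is that preserving the sub-$\bZ$-module $\Pic X\subset\rH^2(X,\hat\bZ(1))$ (rather than merely the $\hat\bZ$-lattice $\Pic X\otimes\hat\bZ$) forces integrality: fixing a $\bZ$-basis of $\Pic X$, which is at the same time a $\hat\bZ$-basis of $\Pic X\otimes\hat\bZ$, the hypothesis $g(\Pic X)\subseteq\Pic X$ says exactly that the matrix of $g$ restricted to $\Pic X\otimes\hat\bZ$ has entries in $\bZ$, so $g$ restricts to a genuine isometry $r\in\Orth(\Pic X)(\bZ)\subset\Orth(\Pic X\otimes\bQ)(\bQ)$. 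Because $g$ is orthogonal and preserves $\Pic X\otimes\bA_f$, it preserves $V_{X,\bA_f}=(\Pic X\otimes\bA_f)^\perp$, where by hypothesis it acts through some $u\in T(\bA_f)$; hence $g=(r,u)\in\Orth(\Pic X\otimes\bQ)(\bQ)\times T(\bA_f)=C(\bQ)T(\bA_f)$.

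I expect the main obstacle to be the centralizer computation, specifically the claim that the centralizer of $T$ in $\Orth(V_X)$ is no larger than the norm-$1$ torus $T$ itself; this is where the CM hypothesis (that $V_X$ is one-dimensional over the field $E=\End(V_X)$) and the compatibility of the intersection form with complex conjugation on $E$ are essential, and it is what makes the two factors of $C$ decouple cleanly. Once $C$ is pinned down, the equivalence is the bookkeeping above, whose one delicate ingredient is reading condition (1) as preservation of the integral lattice $\Pic X$ itself, so that the rationality built into the factor $\Orth(\Pic X\otimes\bQ)(\bQ)$ of $C(\bQ)T(\bA_f)$ is matched automatically.
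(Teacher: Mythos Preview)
Your proof is correct and follows essentially the same approach as the paper: both compute $C = \Orth((\Pic X)\otimes\bQ)\times T$ via the identification $V_{X,\bQ}\cong E$ (so that the centralizer of $T$ in $\Orth(V_{X,\bQ})$ is $T$ itself), and both reduce the equivalence to the observation $\Pic X = \rH^2(X,\hat\bZ(1)) \cap (\Pic X\otimes\bQ)$. You have simply spelled out in detail the bookkeeping that the paper summarizes in one sentence.
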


\begin{proof}
Note that $V_{X,\bQ} \cong E$ with the evident action of $T \subset \Res^E_\bQ\bG_{m,E}$. Hence
he centralizer of $T$ in $\GL(V_{X,\bQ})$ is $E^\times$, and the centralizer of $T$ in $\Orth(V_{X,\bQ})$ is $T$. We deduce that  $C = \Orth((\Pic X)\otimes \bQ) \times T$ as subgroups of $\Orth(\rH^2(X,\bQ(1)))$.
The lemma now follows easily from the observation that $\Pic X = \rH^2(X,\hat\bZ(1)) \cap (\Pic X) \otimes \bQ$.
\end{proof}

\begin{proposition}\label{prop:K3-polarizability}
Let $\pi \subset C(\bQ)T(\bA_f) \cap \Orth(\rH^2(X,\hat\bZ(1)))$ be a compact subgroup. 
Then the following are equivalent:
\begin{enumerate}
\item there is a primitive ample $L \in \Pic X \subset \rH^2(X,\hat\bZ(1))$ invariant under $\pi$
\item $\pi$ preserves the ample cone $K_X \subset (\Pic X)\otimes\bR$.
\end{enumerate}
\end{proposition}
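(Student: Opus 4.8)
The plan is to reduce everything to the action of $\pi$ on the N\'eron--Severi lattice $\Pic X$, and to first replace $\pi$ by a \emph{finite} group. Since $\pi \subset C(\bQ)T(\bA_f)$, the preceding lemma shows that $\pi$ preserves $\Pic X \subset \rH^2(X,\hat\bZ(1))$, so restriction gives a homomorphism $\pi \to \Orth(\Pic X)$ into the isometry group of the lattice $(\Pic X,\cdot)$; write $\Gamma$ for its image. Both conditions in the proposition depend only on $\Gamma$: a class $L\in\Pic X$ is $\pi$-invariant iff it is $\Gamma$-invariant, and $\pi$ preserves $K_X\subset(\Pic X)\otimes\bR$ iff $\Gamma$ does. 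First I would check that $\Gamma$ is finite. Restriction of the action to the invariant sublattice $\Pic X\otimes\hat\bZ$ gives a continuous map from the compact group $\pi$ into the profinite group $\Orth(\Pic X\otimes\hat\bZ)$, so its image $\Gamma$ is compact, hence profinite. But $\Gamma$ is also a subgroup of $\Orth(\Pic X)$, which is countable, being a group of integral matrices. An infinite profinite group is uncountable, so $\Gamma$ must be finite.

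With $\Gamma$ finite, I would prove $(2)\Rightarrow(1)$ by averaging. Pick any primitive ample $L_0\in\Pic X$, so $L_0\in K_X$, and set $L:=\sum_{\gamma\in\Gamma}\gamma L_0\in\Pic X$. If $\Gamma$ preserves $K_X$ then each $\gamma L_0$ lies in the open convex cone $K_X$, hence so does the finite sum $L$, so $L$ is ample. By construction $L$ is $\Gamma$-invariant, and after dividing by the divisibility of $L$ in $\Pic X$ one obtains a primitive class, still ample (a positive multiple of an element of the cone $K_X$) and still $\Gamma$-invariant (as $\Pic X$ is torsion-free). This produces the class required in (1).

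For the converse $(1)\Rightarrow(2)$, the key point is that a $\Gamma$-invariant ample class $L$ pins down which $(-2)$-classes are effective. Recall that the ample cone is
\[
	K_X = \{\, x\in\cC_X \mid x\cdot\delta>0 \text{ for every effective }(-2)\text{-class }\delta \,\},
\]
where $\cC_X$ is the connected component of $\{x\in(\Pic X)\otimes\bR \mid x^2>0\}$ containing the ample classes. By Riemann--Roch on the K3 surface, for every $(-2)$-class $\delta$ exactly one of $\pm\delta$ is effective, and the effective one is the one pairing positively with the ample class $L$. Since each $\gamma\in\Gamma$ fixes $L$ and is an isometry, $\gamma\delta\cdot L=\delta\cdot L$, so $\gamma$ permutes the effective $(-2)$-classes. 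As $\gamma$ also preserves $\cC_X$ (it fixes the point $L\in\cC_X$), the displayed description gives, for $x\in K_X$ and effective $\delta$, that $\gamma x\cdot\delta=x\cdot\gamma^{-1}\delta>0$ because $\gamma^{-1}\delta$ is again effective; hence $\gamma(K_X)\subseteq K_X$, and applying the same to $\gamma^{-1}$ yields $\gamma(K_X)=K_X$. Thus $\Gamma$ preserves $K_X$.

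I expect the finiteness step and the direction $(1)\Rightarrow(2)$ to carry the content. The finiteness is subtle because compactness of $\pi$ in the \emph{finite-adelic} orthogonal group does not obviously bound its image in the real orthogonal group $\Orth((\Pic X)\otimes\bR)$; the clean way around this is to argue profiniteness against countability of the integral isometry group rather than trying to establish boundedness. The direction $(1)\Rightarrow(2)$ is the geometric heart, since it relies on the wall-and-chamber description of the ample cone of a K3 surface and on the Riemann--Roch dichotomy for $(-2)$-classes, which is precisely what lets the single invariant class $L$ control the entire cone.
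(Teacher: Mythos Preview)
Your proof is correct and follows the same strategy as the paper: reduce to a finite group acting on $\Pic X$, average for $(2)\Rightarrow(1)$, and use the chamber description of $K_X$ for $(1)\Rightarrow(2)$. The paper records the finiteness as a one-line consequence of compactness where you spell out the profinite-versus-countable argument, and for $(1)\Rightarrow(2)$ the paper argues more tersely that $\pi$ permutes the connected components of $\{x : x^2>0,\ x\cdot\delta\neq 0\ \text{for all }\delta^2=-2\}$ and hence fixes the one containing the invariant class $L$ --- which is equivalent to your Riemann--Roch argument via effective $(-2)$-classes.
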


\begin{proof}
By compactness, $\pi$ acts with finite image on $\Pic X$.
The ample cone $K_X$ is the unique connected component of 
\[
	\big\{ x \in (\Pic X)\otimes \bR  \mid 
	\text{ $x^2>0$, and $x\cdot \delta \neq 0$ for all $\delta \in \Pic X$ with $\delta^2=-2$ } \big \}
\]
containing any ample class. The group $\pi$ permutes the set of connected components, and if $L \in K_X$ is fixed by $\pi$, then clearly the ample cone is preserved. 
 
 Conversely, assume that the ample cone is preserved. Then, since the ample cone is convex, taking the average over an orbit of any ample class will yield an ample class $L \in K_X \cap (\Pic X)\otimes \bQ$ that is fixed by $\pi$. Scaling if necessary, we may take $L$ to be primitive in $\Pic X$. 
\end{proof}

\begin{proof}[Proof of Theorem \ref{thm:K3}]
With the help of Proposition \ref{prop:K3-polarizability}, the proof follows essentially the same reasoning as the proof of Theorem \ref{thm:AV}. 
 
 If $L$ is a primitive ample line bundle of degree $2d$ on $X$, then identify the pair $(\rH^2(X,\bZ(1),L)$ with
 $(\Lambda,\lambda)$ and consider the Shimura datum $(G_{2d},\Omega^\pm)$ of \S~\ref{subsec:K3-Shimura}. The pair $(X,L)$ defines a complex point $x=(h,1)$ on
 $\Sh_{\cK}[G_{2d},\Omega^\pm]$. If $(\cX,\cL)$ is a model of $(X,L)$ over $F$ corresponding to a point $\xi$ under $x$, then by Theorem \ref{thm:K3-level-Shimura} the Galois representations $\rho \colon \Gal_F \to \cK$ and $\Gal_F \to \Aut \rH^2(\cX_{\bar F}, \hat \bZ(1))$ coincide.
 
Now if $\cX$ is a model of $X$ over $F$, then there exists a primitive ample $\cL \in \Pic_{X/F}(F)$, and $(\cX,\cL)$ defines a point on $\Sh_{\cK}[G_{2d},\Omega^\pm]$ with $\cL\cdot \cL = 2d$. By Theorem \ref{thm:Sh} its associated Galois representation $\rho \colon \Gal_F \to \cK$ satisfies properties (1) and (2) in Theorem \ref{thm:K3}. Since $\Gal_F$ fixes $\cL$, it fixes the ample cone in $(\Pic X)\otimes_\bZ\bR$ and we see that $\rho$ also satisfies (3).

Conversely, assume that
\[
	\rho\colon \Gal_F \to C(\bQ)T(\bA_f) \cap \Orth(\rH^2(X,\hat \bZ(1)))
\]
fixes the ample cone.  Then by Proposition \ref{prop:K3-polarizability}, it fixes some primitive ample $L$ with $L\cdot L=2d$ for some $d$. The pair $(X,L)$ defines a point $x$ on $\Sh_{\cK}[G_{2d},\Omega^\pm]$ with $L\cdot L = 2d$. The group scheme $T$ acts with determinant $1$ on $V_{X,\bQ}$ and since $\rho(\Gal_F)$ preserves $\Pic X$, it acts with determinant $\pm 1$ on $(\Pic X)\otimes \bA_f$. It follows that $\rho$ has image in $\cK \subset \Orth(\rH^2(X, \hat \bZ(1)))$, and by Theorem \ref{thm:Sh} the representation $\rho\colon \Gal_F \to \cK$ corresponds to an $F$-point $\xi$ under $x$, and hence to a model $(\cX,\cL)$ of $(X,L)$.
\end{proof}

\subsection*{Acknowledgments} I thank Olivier Benoist, Wessel Bindt, and Ben Moonen for fruitful discussions and comments on an earlier draft.

\bibliographystyle{plain}
\bibliography{taelman}

\end{document}